\documentclass[12pt,twoside,reqno]{amsart}
\linespread{1.05}
\usepackage[colorlinks=true,citecolor=blue]{hyperref}
\usepackage{mathptmx, amsmath, amssymb, amsfonts, amsthm, mathptmx, enumerate, color}
\setlength{\textheight}{23cm}
\setlength{\textwidth}{16cm}
\setlength{\oddsidemargin}{0cm}
\setlength{\evensidemargin}{0cm}
\setlength{\topmargin}{0cm}
\usepackage{graphicx}
\usepackage{epstopdf}

\newtheorem{theorem}{Theorem}[section]
\newtheorem{corollary}{Corollary}[section]
\newtheorem{lemma}{Lemma}[section]
\newtheorem{proposition}{Proposition}[section]
\theoremstyle{definition}
\newtheorem{definition}{Definition}[section]
\newtheorem{example}{Example}[section]
\newtheorem{remark}{Remark}[section]

\numberwithin{equation}{section}

\begin{document}
\setcounter{page}{1}

\vspace*{1.0cm}
\title[The behavior of error bounds via Moreau envelopes]
{The behavior of error bounds via Moreau envelopes}
\author[Y. Wang, S.J. Li, Y.H. Hu, M.H. Li, X.B. Li]{ Yu Wang$^{1}$, Shengjie Li$^{2,*}$, Yaohua Hu$^{3}$, Minghua Li$^{4}$, Xiaobing Li$^{5}$}
\maketitle
\vspace*{-0.6cm}

\begin{center}
{\footnotesize {\it

$^1$College of Mathematics and Statistics, Chongqing University, China/Chongqing \\
$^2$College of Mathematics and Statistics, Chongqing University, China/Chongqing \\
$^3$Shenzhen Key Laboratory of Advanced Machine Learning and Applications, College of Mathematics and Statistics, Shenzhen University, China/Shenzhen, Guangdong\\
$^4$The Key Laboratory of Complex Data Analysis and Artificial Intelligence of Chongqing, Chongqing University of Arts and Sciences, China/Yongchuan, Chongqing\\
$^5$College of Mathematics and Statistics, Chongqing Jiaotong University, China/Chongqing \\

}}\end{center}

\vskip 4mm {\small\noindent {\bf Abstract.}
In this paper, we first establish the equivalence of three types of error bounds: uniformized Kurdyka-{\L}ojasiewicz (u-KL) property, uniformized  level-set subdifferential error bound (u-LSEB) and uniformized H\"{o}lder error bound (u-HEB) for prox-regular functions. Then we study the behavior of the level-set subdifferential error bound (LSEB) and the local H\"{o}lder error bound (LHEB) which is expressed respectively by Moreau envelopes, under suitable assumptions. Finally, in order to illustrate our main results and to compare them with those of recent references, some examples are also given.

\noindent {\bf Keywords.}
The Kurdyka-{\L}ojasiewicz inequality; The level-set subdifferential error bound; The local H\"{o}lder error bound; Moreau envelope

\renewcommand{\thefootnote}{}
\footnotetext{ $^*$Corresponding author.
\par
E-mail addresses: wangyu103x@163.com (Y. Wang), lisj@cqu.edu.cn (S.J. Li), mayhhu@szu.edu.cn (Y.H. Hu), minghuali20021848@163.com (M.H. Li), xiaobinglicq@126.com (X.B. Li)
\par
}

\section{Introduction}

The theory of error bounds has long been known to be important in variational analysis and optimization theory \cite{{Mordukhovich 2006 2-27},Rockafellar}, and is central to  subdifferential calculus, exact penalty functions, stability and sensitivity analysis, optimality conditions, convergence analysis and convergence rate analysis of various iterative methods \cite{Attouch 2009 6-15-1,Attouch 2013,Burke 2002 1-10,Bolte 2010,Bolte 2014 6-15,Drusvyatskiy 3,Li 5,Li MH 2020,Pang 1997 2-31,Li 6} and references therein. There are many error bounds that have been widely studied in recent years, such as the Kurdyka-{\L}ojasiewicz (KL) inequality, see \cite{Bolte 2010,Li 5,Li 6,Bolte 2017 1-7,Bai 1,Li 2016}, which originated from the seminal work \cite{Lojasiewicz 1963} on {\L}ojasiewicz ({\L}) inequality; the level-set subdifferential error bound (LSEB), see \cite{Bai 1,zhu 2}, which is weaker than that of the KL inequality and the notion of LSEB was first introduced in \cite{zhu 2} and can replace the KL inequality to establish linear convergence for a number of numerical optimization algorithms for non-convex and non-smooth optimization problems;  and the local H\"{o}lder error bound (LHEB), see \cite{Drusvyatskiy 3,Bai 1,Kruger 2019 2-23,Burke 1993}, which includes the traditional linear error bound and weaker than LSEB, and in some subtle applications the traditional linear error bound does not hold while the LHEB does, see \cite{Ioffe 2017,Ngai 2017,Yao 2016}.

Regarding the relationship between these three error bounds, Bai et al. \cite{Bai 1} established the equivalence among {\L} inequality, LSEB and weak sharp minima for weakly convex functions with exponent $\alpha\in[0,1]$ and approximately convex functions. This generalises the equivalence between the LSEB and the {\L} inequality under the convexity assumption in \cite{zhu 2}. In summary, the above error bounds are given for a stationary point and not for the stationary set.

With respect to the KL exponent, Li and Pong \cite{Li 5} studied the KL exponent, which is an important quantity for analysing the convergence rate of first-order methods, and they developed various calculus rules to deduce the KL exponent of new (possibly non-convex and non-smooth) functions formed from functions with known KL exponents. In \cite{Li 6}, Yu et al. showed that under mild assumptions the KL exponent is preserved via inf-projection and that the KL exponent of the Bregman envelope of a function is restrained by that of the function itself, improving on the corresponding results in \cite{Li 5}.

Moreau envelope plays an important role in the structure of the algorithm and was first mentioned in \cite{Moreau 1962} and has since been studied by many scholars, cf. \cite{Drusvyatskiy 3,Li 5,Li 6,Rockafellar 2009,Jourani 2014,Kan 2012,Perez-Aros 2021,Rockafellar 1976} and references therein. Noting that the Moreau envelope is a special case of the Bregman envelope, and that there are few studies on whether the LSEB and LHEB exponents have transferability to the Moreau envelope.

Motivated by \cite{Li 5,Li 6,Bai 1,zhu 2}, in this paper, we first establish the equivalence among u-KL property, u-LSEB and u-HEB on a nonempty and compact set for a prox-regular function, which is a generalisation of the corresponding conclusion in \cite{Bai 1}. Then we show that LSEB (u-LSEB) and LHEB (u-HEB) exponents and constants are expressed via the Moreau envelopes of a function is restrained by that of the function itself, which improves the corresponding results in \cite{Li 6}. We also give some examples to show that our results are more applicable.

The rest of this paper is structured as follows. Section \ref{2} provides notation and preliminaries. Section \ref{3.1} investigates the equivalence among the u-KL property, u-LSEB and u-HEB. Section \ref{4} presents the different behavior under Moreau envelopes of a proper and closed function. Finally, we present some concluding remarks in Section \ref{5}.

\section{Preliminaries}\label{2}

Throughout this paper, we use $\mathbb{R }^n$ to denote the n-dimensional Euclidean space with scalar product $\langle\cdot ,\cdot \rangle$ and corresponding norm $\|\cdot\|$.
Let $B(c, r)$ denote the open ball with centered at $c$ and radius $r$. For a nonempty set $A\subseteq \mathbb{R }^n$, we denote the distance from a point $x\in \mathbb{R }^n$ to $A$ by
$$d(x,A)=\inf_{y\in A} \|x -y\|,$$
the set of points in $A$ that achieve this infimum is called the projection of $x$ onto $A$ and denoted by $\textrm{Proj}_A(x)$. For a empty set $A$, we define $d(x, A) = +\infty$.

An extended real valued function $f : \mathbb{R }^n \rightarrow \bar{\mathbb{R}}:= (-\infty,\infty]$ is called proper if its domain $\textrm{dom}\,f := \big\{x : f(x) < \infty\big\}\neq \emptyset$. The function $f$ is called closed if it is lower semicontinuous $\big(\liminf_{x\rightarrow \bar{x}}f(x)=f(\bar{x})\,\,\textrm{for any}\,\, \bar{x}\in \mathbb{R }^n \big )$.%
For a closed function $f : \mathbb{R }^n \rightarrow \bar{\mathbb{R}}$ and $\alpha\in \mathbb{R}$, the level set of $f$ is the closed set
$$\textrm{lev}_{\leq \alpha}f:=\big\{x\in \mathbb{R }^n: f(x)\leq \alpha \big\}.$$
A proper and closed function $f : \mathbb{R }^n \rightarrow \bar{\mathbb{R}}$ is said to satisfy a Lipschitz condition of rank $K$ on a given set $S$ provided that $f$ is finite on $S$ and satisfies
$$|f(x)-f(y)|\leq \|x-y\|,\,\,\forall x,y \in S.$$
A proper and closed function $f:\mathbb{R}^n \rightarrow \bar{\mathbb{R}}$ is said to be prox-regular at $\bar{x}$ for $\bar{v}$ with a constant $\rho$, if $\bar{x}\in \textrm{dom} \,f$ with $\bar{v}\in \partial f(\bar{x})$, and there exist $\varepsilon>0$ and $\rho\geq 0$ such that
\begin{eqnarray}
   f(y)\geq f(x) +\langle v,y-x\rangle-\frac{\rho}{2}\|y-x\|^2  \,\, \forall y\in B(\bar{x},\varepsilon)\nonumber
\end{eqnarray}
when $v\in \partial f(x)\cap B(\bar{v},\varepsilon)$, $x\in B(\bar{x},\varepsilon)$ with $f(x)<f(\bar{x})+\varepsilon$.

The regular subdifferential and limiting subdifferential of $f$ at $\bar{x}\in \textrm{dom}\,f$ are defined in [46, Definition 8.3], respectively, by
$$\hat{\partial} f(\bar{x}):=\bigg\{v\in \mathbb{R }^n:\liminf_{x\rightarrow \bar{x},x\neq \bar{x}}\frac{f(x)-f(\bar{x})-\langle v,x-\bar{x}\rangle}{\|x-\bar{x}\|}\geq 0\bigg\},$$
$$\partial f(\bar{x}):= \big\{v\in \mathbb{R }^n: \exists x_k \stackrel{f}{\rightarrow} \bar{x}, v_k\rightarrow v \,\,\textrm{with}\,\, v_k\in \hat{\partial} f(x_k) \,\,\textrm{for each}\,\, k\big\},$$
where $x_k \stackrel{f}{\rightarrow} \bar{x}$ means both $x_k \rightarrow \bar{x}$ and $f(x_k) \rightarrow f(\bar{x})$. By the definitions of subdifferentials, the inclusion relationship $\hat{\partial} f(x)\subseteq \partial f(x)$ always holds. By convention, we set $\partial f(x)=\emptyset$ for $x \notin \textrm{dom} \,f$, and write $\textrm{dom} \,\partial f:= \{x \in\mathbb{R }^n: \partial f(x)\neq\emptyset\}$.

For simplicity, for any $\bar{x}\in\textrm{dom}\,f$ and $\nu,\eta>0$,
$$[f(\bar{x})<f<f(\bar{x})+\nu]:=\big\{x\in\mathbb{R }^n:f(\bar{x})<f(x)<f(\bar{x})+\nu  \big\},$$
$$\mathfrak{B}(\bar{x},\eta,\nu):= B(\bar{x},\eta)\cap [f(\bar{x})<f<f(\bar{x})+\nu].$$

A proper and closed function $f: \mathbb{R }^n \rightarrow \bar{\mathbb{R}}$ satisfies the Kurdyka-{\L}ojasiewicz (KL) property or {\L}ojasiewicz inequality at $\bar{x}\in\textrm{dom} \,\partial f$ with an exponent $\gamma\in [0,1)$ and a constant $\mu> 0$, if there exist $\eta,\nu > 0$ such that
\begin{eqnarray}
\big(f(x)-f(\bar{x})\big)^\gamma\leq \mu d\big(0,\partial f(x)\big)\,\, \forall x\in\mathfrak{B}(\bar{x},\eta,\nu).\nonumber
\end{eqnarray}

The function $f$ satisfies the level-set subdifferential error bound (LSEB) at $\bar{x}\in\textrm{dom} \,\partial f$ with an exponent $\gamma\geq 0$ and a constant $\mu> 0$, if there exist $\eta > 0$ and $\nu > 0$ such that
\begin{eqnarray}
d^{\gamma}(x,\textrm{lev}_{\leq f(\bar{x})}f)\leq \mu d\big(0,\partial f(x)\big)\,\, \forall x\in\mathfrak{B}(\bar{x},\eta,\nu).\nonumber
\end{eqnarray}

And the function $f$ satisfies the local H\"{o}lder error bound (LHEB) at $\bar{x}\in\textrm{dom} \, f$ with an exponent $\gamma> 0$ and a constant $\mu> 0$, 
    if there exist $\eta > 0$ such that
$$d^{\gamma}(x,\textrm{lev}_{\leq f(\bar{x})}f)\leq \mu \big(f(x)-f(\bar{x})\big)$$
for any $x\in B(\bar{x},\eta)$ with $f(\bar{x})<f(x)$.

Now, we recall the uniformized KL property, which can be found in \cite[Lemma 2.2]{Li 6}.
\begin{definition}\label{def2.1}\rm
 Suppose that $\Omega\subseteq\textrm{dom}\,\partial f$ is a nonempty and compact set and $f$ takes a constant value on $\Omega$. The function $f$ satisfies the uniformized Kurdyka-{\L}ojasiewicz (u-KL) property on $\Omega$ with an exponent $0\leq\gamma<1$ and a constant $\mu> 0$, if there exist $\varepsilon > 0$ and $\nu > 0$ such that
\begin{eqnarray}
\big(f(x)-f(\bar{x})\big)^\gamma\leq \mu d\big(0,\partial f(x)\big)\nonumber
\end{eqnarray}
for any $\bar{x}\in\Omega$ and any $x$ with $d(x,\Omega)<\varepsilon$ and $f(\bar{x})<f(x)<f(\bar{x})+\nu$.
\end{definition}

Motivated by Definition \ref{def2.1}, we introduce the following definitions.
\begin{definition}\rm
Suppose that $\Omega\subseteq\textrm{dom}\,\partial f$ is a nonempty and compact set and $f$ takes a constant value on $\Omega$. The function $f$ satisfies the uniformized level-set subdifferential error bound (u-LSEB) on $\Omega$ with an exponent $\gamma\geq 0$ and a constant $\mu> 0$, if there exist $\varepsilon > 0$ and $\nu > 0$ such that
\begin{eqnarray}
d^{\gamma}(x,\textrm{lev}_{\leq f(\bar{x})}f)\leq \mu d\big(0,\partial f(x)\big)\nonumber
\end{eqnarray}
for any $\bar{x}\in\Omega$ and any $x$ with $d(x,\Omega)<\varepsilon$ and $f(\bar{x})<f(x)<f(\bar{x})+\nu$.
\end{definition}

\begin{definition}\rm
Suppose that $\Omega\subseteq\textrm{dom}\, f$ is a nonempty and compact set and $f$ takes a constant value on $\Omega$. The function $f$ satisfies the uniformized H$\ddot{\textrm{o}}$lder error bound (u-HEB) on $\Omega$ with an exponent $\gamma > 0$ and a constant $\mu> 0$, if there exist $\varepsilon > 0$ such that
\begin{eqnarray}
d^{\gamma}(x,\textrm{lev}_{\leq f(\bar{x})}f)\leq \mu \big(f(x)-f(\bar{x})\big)\nonumber
\end{eqnarray}
for any $\bar{x}\in\Omega$ and any $x$ with $d(x,\Omega)<\varepsilon$ and $f(\bar{x})<f(x)$.
\end{definition}

\section{The equivalence among u-KL property, u-LSEB and u-HEB}\label{3.1}

In this section we want to discuss the equivalence among u-KL property, u-LSEB and u-HEB. Before doing so, we recall Lemma 2.2 in \cite{Li 6}, which concerned the u-KL property. Then the following lemmas are given.

\begin{lemma} \label{Lemma 1}
Let $f:\mathbb{R}^n \rightarrow \bar{\mathbb{R}}$ be a proper and closed function and $\Omega \subseteq \textrm{dom}\,\partial f$ be a nonempty and compact set. If $f$ takes a
constant value on $\Omega$ and satisfies the LSEB at each point of $\Omega$. Then $f$ satisfies the u-LSEB on $\Omega$ with an exponent $\gamma\geq 0$ and a constant $\mu> 0$.
\end{lemma}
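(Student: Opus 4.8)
The plan is to mirror the standard uniformization argument for the KL property (Lemma 2.2 of \cite{Li 6}), exploiting the hypothesis that $f$ is constant on $\Omega$. Write $c$ for the common value of $f$ on $\Omega$, so that $f(\bar{x})=c$ for every $\bar{x}\in\Omega$. The crucial simplification is that the level set in the LSEB inequality becomes \emph{independent} of the base point: $\textrm{lev}_{\leq f(\bar{x})}f=\textrm{lev}_{\leq c}f$ for all $\bar{x}\in\Omega$. Consequently the condition $f(\bar{x})<f(x)<f(\bar{x})+\nu$ appearing in the definition of u-LSEB reduces to $c<f(x)<c+\nu$, which no longer depends on $\bar{x}$, so it suffices to produce uniform radii $\varepsilon,\nu>0$ and a single constant $\mu$ for which
\begin{equation}
d^{\gamma}\big(x,\textrm{lev}_{\leq c}f\big)\leq \mu\, d\big(0,\partial f(x)\big)\nonumber
\end{equation}
holds whenever $d(x,\Omega)<\varepsilon$ and $c<f(x)<c+\nu$.

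First I would invoke the pointwise hypothesis: for each $\bar{x}\in\Omega$ the LSEB (with the common exponent $\gamma$) furnishes radii $\eta_{\bar{x}},\nu_{\bar{x}}>0$ and a constant $\mu_{\bar{x}}>0$ so that the displayed inequality holds for all $x\in\mathfrak{B}(\bar{x},\eta_{\bar{x}},\nu_{\bar{x}})$. The open balls $\{B(\bar{x},\eta_{\bar{x}}/2):\bar{x}\in\Omega\}$ form an open cover of the compact set $\Omega$; extracting a finite subcover $B(\bar{x}_1,\eta_{\bar{x}_1}/2),\dots,B(\bar{x}_m,\eta_{\bar{x}_m}/2)$, I would then set
\begin{equation}
\varepsilon:=\min_{1\leq i\leq m}\frac{\eta_{\bar{x}_i}}{2},\qquad \nu:=\min_{1\leq i\leq m}\nu_{\bar{x}_i},\qquad \mu:=\max_{1\leq i\leq m}\mu_{\bar{x}_i},\nonumber
\end{equation}
all of which are strictly positive because the index set is finite.

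It then remains to verify the uniform inequality for any $x$ with $d(x,\Omega)<\varepsilon$ and $c<f(x)<c+\nu$. Since $\Omega$ is compact I may pick $\bar{x}\in\textrm{Proj}_\Omega(x)$ with $\|x-\bar{x}\|=d(x,\Omega)<\varepsilon$; as $\bar{x}\in\Omega$ it lies in some $B(\bar{x}_i,\eta_{\bar{x}_i}/2)$, whence by the triangle inequality $\|x-\bar{x}_i\|\leq\|x-\bar{x}\|+\|\bar{x}-\bar{x}_i\|<\varepsilon+\eta_{\bar{x}_i}/2\leq\eta_{\bar{x}_i}$, so $x\in B(\bar{x}_i,\eta_{\bar{x}_i})$. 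Moreover $c<f(x)<c+\nu\leq c+\nu_{\bar{x}_i}$ together with $f(\bar{x}_i)=c$ gives $x\in[f(\bar{x}_i)<f<f(\bar{x}_i)+\nu_{\bar{x}_i}]$, so $x\in\mathfrak{B}(\bar{x}_i,\eta_{\bar{x}_i},\nu_{\bar{x}_i})$. Applying the pointwise LSEB at $\bar{x}_i$ and then bounding $\mu_{\bar{x}_i}\leq\mu$ yields the required estimate. The only point demanding care is the bookkeeping of radii in the covering step: halving the radii before passing to the finite subcover is what makes the triangle inequality close, and one must keep the $\nu_{\bar{x}_i}$ uniform so that the level-containment does not vary with $i$. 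Beyond that the argument is routine, precisely because the constant-value assumption collapses the whole family of level sets to the single set $\textrm{lev}_{\leq c}f$.
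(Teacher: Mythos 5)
Your argument is correct and follows essentially the same route as the paper's proof: halve the pointwise radii, extract a finite subcover of the compact set $\Omega$, take the minimum of the radii and $\nu$'s and the maximum of the constants, and close the estimate with the triangle inequality applied to a projection of $x$ onto $\Omega$. The only difference is that you assume a single common exponent $\gamma$ at every point of $\Omega$, whereas the paper allows pointwise exponents $\gamma_{z_i}$ and takes their maximum; your observation that the constant-value hypothesis collapses all level sets to the single set $\textrm{lev}_{\leq c}f$ is used implicitly in the paper as well.
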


\begin{proof} For any $\bar{x}\in\Omega$. Since $f$ takes a constant value on $\Omega$ and satisfies the LSEB at each point of $\Omega$, then for any $z\in \Omega$, there exist $\gamma_z\geq 0$ and $\mu_z,\varepsilon_z,\nu_z> 0$ such that
\begin{eqnarray}
d^{\gamma_z}(x,\textrm{lev}_{\leq f(\bar{x})}f)\leq \mu_z d\big(0,\partial f(x)\big)\,\,\forall x\in \mathfrak{B}(z,\varepsilon_z,\nu_z).\nonumber
\end{eqnarray}

Hence, $\{B(z_i,\frac{\varepsilon_{z_i}}{2}):z_i\in \Omega\}$ is an open cover of $\Omega$, it follows from the compactness of $\Omega$ that there exist $z_i\in\Omega$, $i = 1,...,p$ such that
\begin{eqnarray}
\Omega\subseteq \bigcup^p_{i=1}B(z_i,\frac{\varepsilon_{z_i}}{2}).\label{2.2}
\end{eqnarray}
Set $\nu := \min \{\nu_{z_i}: i = 1,..., p\}$, $\varepsilon := \min \{\varepsilon_{z_i}: i = 1,..., p\}$ and $U_\varepsilon:=\{x\in \mathbb{R}^n:d\big(x,\Omega)<\frac{\varepsilon}{2}\}.$ 
For any $x\in U_\varepsilon$, it follows from (\ref{2.2}) there is an index $j\in\{1,...p\}$ such that $x_j\in B(z_j,\frac{\varepsilon_{z_j}}{2})$, where $x_j$ is one of the projection of $x$ onto $\Omega$. Then we have
\begin{equation}
\|x-z_j\|\leq \|x-x_j\|+\|x_j-z_j\| < \frac{\varepsilon}{2}+\frac{\varepsilon_{z_j}}{2}\leq \varepsilon_{z_j}. \nonumber
\end{equation}
It means that $U_\varepsilon\subseteq \bigcup^p_{i=1}B(z_i,\varepsilon_{z_i})$ and
$$U_\varepsilon\cap[f(\bar{x})<f<f(\bar{x})+\nu]\subseteq \bigcup^p_{i=1}\bigg\{ B(z_i,\varepsilon_{z_i})\cap [f(\bar{x})<f<f(\bar{x})+\nu_{z_i}]\bigg\}=\bigcup^p_{i=1}\mathfrak{B}(z_i,\varepsilon_{z_i},\nu_{z_i}).$$
Then, for any $x \in U_\varepsilon\cap[f(\bar{x})<f<f(\bar{x})+\nu]$, one has
\begin{equation}
d^{\gamma}(x,\textrm{lev}_{\leq f(\bar{x})}f)\leq \mu d\big(0,\partial f(x)\big),\nonumber
\end{equation}
where $\gamma := \max \{\gamma_{z_i}: i = 1,..., p\}$ and $\mu := \max \{\mu_{z_i}: i = 1,..., p\}$. The proof is complete. \end{proof}

\begin{lemma} \label{Lemma 2}
Let $f:\mathbb{R}^n \rightarrow \bar{\mathbb{R}}$ be a proper and closed function and $\Omega \subseteq \textrm{dom}\,f$ be a nonempty and compact set. If $f$ takes a
constant value on $\Omega$ and satisfies the LHEB at each point of $\Omega$. Then $f$ satisfies the u-HEB on $\Omega$ with an exponent $\gamma> 0$ and a constant $\mu> 0$.
\end{lemma}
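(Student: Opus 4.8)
The plan is to transcribe, almost verbatim, the compactness argument that established Lemma \ref{Lemma 1}, with the subdifferential term $d(0,\partial f(x))$ on the right-hand side replaced by the H\"{o}lder term $f(x)-f(\bar{x})$, and with the two-sided band $[f(\bar{x})<f<f(\bar{x})+\nu]$ replaced by the one-sided condition $f(\bar{x})<f(x)$. First I would use that $f$ is constant on $\Omega$: calling this common value $c$, every $z\in\Omega$ has $f(z)=c=f(\bar{x})$, so both the level set $\textrm{lev}_{\leq f(z)}f$ and the activation condition $f(z)<f(x)$ in the LHEB at $z$ agree with $\textrm{lev}_{\leq f(\bar{x})}f$ and $f(\bar{x})<f(x)$ for every $\bar{x}\in\Omega$. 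Hence the LHEB at each $z\in\Omega$ furnishes constants $\gamma_z>0$, $\mu_z>0$ and a radius $\eta_z>0$ such that
$$d^{\gamma_z}(x,\textrm{lev}_{\leq f(\bar{x})}f)\leq \mu_z\big(f(x)-f(\bar{x})\big)\,\,\textrm{for all}\,\, x\in B(z,\eta_z)\,\,\textrm{with}\,\, f(\bar{x})<f(x).$$

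Next I would cover $\Omega$ by the balls $B(z,\eta_z/2)$ and, by compactness, extract a finite subcover $\{B(z_i,\eta_{z_i}/2)\}_{i=1}^{p}$. Setting $\varepsilon:=\min\{\eta_{z_i}:i=1,\dots,p\}$ (shrunk if necessary so that $\varepsilon\leq 2$) and $U_\varepsilon:=\{x:d(x,\Omega)<\varepsilon/2\}$, the identical triangle-inequality estimate used in Lemma \ref{Lemma 1} shows that every $x\in U_\varepsilon$ falls into some $B(z_j,\eta_{z_j})$: choosing $x_j\in\textrm{Proj}_\Omega(x)$ and the index $j$ with $x_j\in B(z_j,\eta_{z_j}/2)$ gives $\|x-z_j\|\leq\|x-x_j\|+\|x_j-z_j\|<\varepsilon/2+\eta_{z_j}/2\leq\eta_{z_j}$. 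Unlike the LSEB case there is no parameter $\nu$ to track, since both LHEB and u-HEB involve only the one-sided requirement $f(\bar{x})<f(x)$; this is the single structural simplification relative to Lemma \ref{Lemma 1}. Consequently, for any $x\in U_\varepsilon$ with $f(\bar{x})<f(x)$, the membership $x\in B(z_j,\eta_{z_j})$ together with $f(z_j)=f(\bar{x})<f(x)$ lets me invoke the LHEB at $z_j$, so that
$$d^{\gamma_{z_j}}(x,\textrm{lev}_{\leq f(\bar{x})}f)\leq \mu_{z_j}\big(f(x)-f(\bar{x})\big).$$

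I would finish by passing to the uniform constants $\gamma:=\max\{\gamma_{z_i}\}$ and $\mu:=\max\{\mu_{z_i}\}$, and this is the only point that requires care — the main, albeit mild, obstacle. Since $f\equiv f(\bar{x})$ on $\Omega$ we have $\Omega\subseteq\textrm{lev}_{\leq f(\bar{x})}f$, whence $d(x,\textrm{lev}_{\leq f(\bar{x})}f)\leq d(x,\Omega)<\varepsilon/2\leq 1$. Because $t\mapsto t^{\gamma}$ is nonincreasing in the exponent on $[0,1]$, this bound on the distance yields $d^{\gamma}(x,\textrm{lev}_{\leq f(\bar{x})}f)\leq d^{\gamma_{z_j}}(x,\textrm{lev}_{\leq f(\bar{x})}f)$, and combining with the previous display and $\mu_{z_j}\leq\mu$ gives
$$d^{\gamma}(x,\textrm{lev}_{\leq f(\bar{x})}f)\leq \mu\big(f(x)-f(\bar{x})\big),$$
which is precisely the u-HEB on $\Omega$ with exponent $\gamma>0$ and constant $\mu>0$. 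This monotonicity device, enabled by the compactness-induced shrinking of $\varepsilon$, is the exact analogue of the step that legitimises the uniform exponent in Lemma \ref{Lemma 1}, and it is the only place where $\varepsilon\leq 2$ is used.
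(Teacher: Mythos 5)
Your proof is correct and follows the same route as the paper's: a finite subcover of $\Omega$ by the balls furnished by the pointwise LHEB, followed by taking the maxima of the exponents and constants. You are in fact more careful than the paper, which silently replaces $d^{\gamma_{z_j}}$ by $d^{\gamma}$ with $\gamma=\max_i\gamma_{z_i}$; your observation that $d(x,\textrm{lev}_{\leq f(\bar{x})}f)\leq d(x,\Omega)<1$ after shrinking $\varepsilon$, combined with the monotonicity of $t\mapsto t^{\gamma}$ in the exponent on $[0,1]$, supplies exactly the justification the paper omits.
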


\begin{proof} For any $\bar{x}\in\Omega$. The compact set $\Omega$ can be covered by a finite number of open balls $B(x_i,\varepsilon_i)$ with $x_i\in\Omega$ and $\varepsilon_i>0$ for $i = 1,...,p$. Since $f$ satisfies the LHEB at each point of $\Omega$, then there exist $\gamma_i> 0$ and $\mu_i  > 0$ such that
\begin{eqnarray}
d^{\gamma_i}(x,\textrm{lev}_{\leq f(\bar{x})}f)\leq \mu_i \big(f(x)-f(\bar{x})\big) \nonumber
\end{eqnarray}
for any $x\in B(x_i,\varepsilon_i)$ with $f(\bar{x})<f(x)$.
We can choose $\varepsilon>0$ be small enough so that
$$U_\varepsilon:=\{x\in \mathbb{R}^n:d\big(x,\Omega)\leq\varepsilon\}\subseteq \bigcup^p_{i=1}B(x_i,\varepsilon_i).$$
Then, for every $x \in U_\varepsilon$ with $f(\bar{x})<f(x)$, we have
$$d^{\gamma}(x,\textrm{lev}_{\leq f(\bar{x})}f)\leq \mu \big(f(x)-f(\bar{x})\big),$$
where $\gamma := \max \{\gamma_i: i = 1,..., p\}$ and $\mu := \max \{\mu_i: i = 1,..., p\}$. The proof is completed.  \end{proof}

Now, we recall the following lemma which follows from the Theorem 2.1 in \cite{Bai 1}.

\begin{lemma} \label{{Bai 1} Theorem 2.1 (b)} \cite{Bai 1}
Let $f$ be a proper and closed function and $\bar{x}\in \mathbb{R}^n$. Assume that $f$ satisfies the KL property at $\bar{x}$ with an exponent $\gamma_1 \in [0, 1)$ and a constant $\mu_1> 0$. Then, $f$ satisfies LSEB at $\bar{x}$ with an exponent $\gamma_2=\frac{\gamma_1}{1-\gamma_1}$ and a constant $\mu_2=(1-\gamma_1)^{\frac{-\gamma_1}{1-\gamma_1}} {\mu_1}^{\frac{1}{1-\gamma_1}}$. Furthermore, $f$ satisfies LHEB at $\bar{x}$ with an exponent $\gamma_3=\gamma_2+1$ and a constant $\mu_3=\frac{(\gamma_2+1)^{\gamma_2+1}}{\gamma_2^{\gamma_2}}\mu_2$.
\end{lemma}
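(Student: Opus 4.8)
The plan is to prove the two implications in turn, with a single distance-to-sublevel-set estimate doing the real work. Normalizing $f(\bar{x})=0$ and writing $L=\textrm{lev}_{\leq 0}f$, $r(x)=f(x)$ and $g(x)=d(0,\partial f(x))$, the KL hypothesis reads $r(x)^{\gamma_1}\leq \mu_1 g(x)$ for $x\in\mathfrak{B}(\bar{x},\eta,\nu)$. The engine of the whole argument is the claim that, after possibly shrinking $\eta,\nu$, one has $d(x,L)\leq \frac{\mu_1}{1-\gamma_1}\,r(x)^{1-\gamma_1}$ on $\mathfrak{B}(\bar{x},\eta,\nu)$; granting this, both error bounds drop out by elementary manipulation of exponents.

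First I would establish the distance estimate. The cleanest rigorous route is the \emph{decrease principle} for lower semicontinuous functions (a consequence of Ekeland's variational principle): if the slope $g(\cdot)$ is bounded below by $\sigma>0$ on a level band, then $f$ can be decreased by $\sigma$ times the distance travelled. The KL inequality furnishes exactly such a lower bound, $g(z)\geq \frac{1}{\mu_1}s^{\gamma_1}$, on the band $\{f=s\}$. Integrating this level by level from $s=0$ up to $s=r(x)$ yields $d(x,L)\leq \int_0^{r(x)}\mu_1 s^{-\gamma_1}\,ds=\frac{\mu_1}{1-\gamma_1}r(x)^{1-\gamma_1}$. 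Equivalently, one may introduce the concave desingularizing function $\varphi(s)=\frac{\mu_1}{1-\gamma_1}s^{1-\gamma_1}$, note that KL becomes $\varphi'(r)g\geq 1$, and bound $d(x,L)$ by the length of the subgradient descent curve $t\mapsto x(t)$ issuing from $x$, which is controlled by $\varphi(r(x))$ because $-\frac{d}{dt}\varphi(f(x(t)))=\varphi'(r)g^2\geq g=\|\dot{x}(t)\|$.

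With the distance estimate in hand, LSEB is immediate: since $(1-\gamma_1)\gamma_2=\gamma_1$, raising the estimate to the power $\gamma_2=\frac{\gamma_1}{1-\gamma_1}$ gives $d(x,L)^{\gamma_2}\leq(\frac{\mu_1}{1-\gamma_1})^{\gamma_2}r(x)^{\gamma_1}$, and substituting the KL bound $r^{\gamma_1}\leq\mu_1 g$ produces $d(x,L)^{\gamma_2}\leq(\frac{\mu_1}{1-\gamma_1})^{\gamma_2}\mu_1\,g(x)$; collecting exponents via $\gamma_2+1=\frac{1}{1-\gamma_1}$ simplifies the constant to exactly $\mu_2=(1-\gamma_1)^{-\gamma_1/(1-\gamma_1)}\mu_1^{1/(1-\gamma_1)}$. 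For LHEB I would pass from LSEB to the function-value bound by a descent/Young-type argument: integrating the LSEB inequality $g\geq\frac{1}{\mu_2}d(\cdot,L)^{\gamma_2}$ along the descent curve bounds $r(x)$ below by a multiple of $d(x,L)^{\gamma_2+1}$, and the resulting Young split gives the exponent $\gamma_3=\gamma_2+1$ with constant $\mu_3=\frac{(\gamma_2+1)^{\gamma_2+1}}{\gamma_2^{\gamma_2}}\mu_2$.

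The main obstacle is the distance estimate of the second paragraph, since $f$ is only assumed proper and closed (nonconvex, nonsmooth): making the level-by-level integration rigorous requires either the decrease principle together with a covering of the level interval $(0,r(x))$, or the full theory of curves of maximal slope (existence of absolutely continuous subgradient trajectories, the chain rule along them, and their convergence to $L$). Once this analytic core is secured, the derivations of LSEB and LHEB are purely algebraic, and the only additional care needed is to shrink the neighborhood so that the entire descent trajectory remains inside the region where the KL inequality is valid.
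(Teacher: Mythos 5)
The paper gives no proof of this lemma at all: it is imported verbatim from Theorem 2.1 of the cited reference of Bai, Li, Lu, Zhu and Deng, so there is no in-paper argument to compare against. Your reconstruction is essentially the standard argument behind that theorem, and the bookkeeping checks out: the distance estimate $d(x,\mathrm{lev}_{\leq f(\bar{x})}f)\leq \frac{\mu_1}{1-\gamma_1}\big(f(x)-f(\bar{x})\big)^{1-\gamma_1}$ is the correct analytic core, and raising it to the power $\gamma_2=\frac{\gamma_1}{1-\gamma_1}$ and substituting the KL inequality does produce exactly $\mu_2=(1-\gamma_1)^{-\gamma_1/(1-\gamma_1)}\mu_1^{1/(1-\gamma_1)}$ (indeed, raising it instead to the power $\gamma_2+1=\frac{1}{1-\gamma_1}$ gives LHEB directly with a constant no larger than the stated $\mu_3$, so the Young-split detour is not even needed). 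Two caveats on the core step. First, of the two justifications you offer, only the Ekeland/decrease-principle route is available at the stated generality: for a merely proper closed $f$ there is no existence theory or chain rule for the subgradient descent curve $t\mapsto x(t)$, so the computation $-\frac{d}{dt}\varphi(f(x(t)))=\varphi'(r)g^2\geq\|\dot{x}(t)\|$ is heuristic. The Ekeland argument, applied to the desingularized function $\varphi\circ f$ with $\varphi(s)=\frac{\mu_1}{1-\gamma_1}s^{1-\gamma_1}$, does go through with the limiting subdifferential used in the paper's definitions, because Ekeland's principle produces points with small Fr\'{e}chet subgradients and $\hat{\partial}f\subseteq\partial f$ makes $d(0,\partial f)$ small there as well, contradicting the KL lower bound. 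Second, a small omission: the paper's LHEB definition has no upper band $f(x)<f(\bar{x})+\nu$, whereas KL only holds on such a band; the points with $f(x)\geq f(\bar{x})+\nu$ are handled by shrinking $\eta$ so that $\eta^{\gamma_3}\leq\mu_3\nu$ and using $d(x,\mathrm{lev}_{\leq f(\bar{x})}f)\leq\|x-\bar{x}\|$. With those two points made explicit, your outline is a correct proof of the cited result.
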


Motivated by Theorem 2.1 of \cite{Bai 1}, we give the following result.
\begin{proposition} \label{prop 2.1}
Let $f:\mathbb{R}^n\rightarrow \bar{\mathbb{R}}$ be a proper and closed function and $\bar{x}\in \partial^{-1} f(0)$. Assume that $f$ is prox-regular at $\bar{x}$ for $\bar{v}=0$ with a constant $\rho\geq 0$. Consider the following conditions:
\begin{enumerate}
  \item [(i)] $f$ satisfies the KL property at $\bar{x}$ with an exponent $1>\gamma_1\geq 0$ and a constant $\mu_1> 0$;
  \item [(ii)] $f$ satisfies the LSEB at $\bar{x}$ with an exponent $\gamma_2\geq 0$ and a constant $\mu_2> 0$;
  \item [(iii)] $f$ satisfies the LHEB at $\bar{x}$ with an exponent $\gamma_3> 0$ and a constant $\mu_3> 0$.
\end{enumerate}
Then, we have the following results:
\begin{enumerate}
  \item [(a)] If $1\leq \gamma_3 <2 $, (iii) $\Rightarrow$ (ii) with the exponent $\gamma_2=\gamma_3-1$ and the constant $\mu_2>\mu_3$; If $\gamma_3= 2$ and $\mu_3 <\frac{2}{\rho}$, (iii) $\Rightarrow$ (ii) with the exponent $\gamma_2=1$ and the constant $\mu_2=\frac{2\mu_3}{2-\rho\mu_3}$.
  \item [(b)] (ii) $\Rightarrow$ (i) with the exponent $\gamma_1=\max\{\frac{\gamma_2}{1+\gamma_2},\frac{\gamma_2}{2}\}$ and the constant $\mu_1=(\mu_2^{\frac{1}{\gamma_2}}+\frac{\rho}{2}\mu_2^{\frac{2}{\gamma_2}})^{\max\{\frac{\gamma_2}{1+\gamma_2},\frac{\gamma_2}{2}\}}$.

\end{enumerate}
\end{proposition}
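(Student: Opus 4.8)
The plan is to base the whole argument on a single consequence of prox-regularity, used as a master inequality. Set $S:=\textrm{lev}_{\leq f(\bar{x})}f$ and, for a point $x$ near $\bar{x}$ with $f(x)>f(\bar{x})$ and $\partial f(x)\neq\emptyset$, write $s:=d(0,\partial f(x))$ and $t:=d(x,S)$. First I would establish
\[
f(x)-f(\bar{x})\leq s\,t+\frac{\rho}{2}t^2. \quad (\star)
\]
To get $(\star)$, choose a minimal-norm subgradient $v\in\partial f(x)$ with $\|v\|=s$ and a projection $\bar{z}\in\textrm{Proj}_S(x)$, so that $\|x-\bar{z}\|=t$ and $f(\bar{z})\leq f(\bar{x})$. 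Since $\bar{x}\in S$ we have $t\leq\|x-\bar{x}\|$ and hence $\|\bar{z}-\bar{x}\|\leq 2\|x-\bar{x}\|$, so for $x$ close to $\bar{x}$ both $x,\bar{z}\in B(\bar{x},\varepsilon)$; if moreover $\|v\|<\varepsilon$ and $f(x)<f(\bar{x})+\varepsilon$, the prox-regularity inequality with $y=\bar{z}$ yields $f(\bar{x})\geq f(\bar{z})\geq f(x)+\langle v,\bar{z}-x\rangle-\frac{\rho}{2}\|\bar{z}-x\|^2$, and Cauchy--Schwarz produces $(\star)$. Note that $t>0$ whenever $f(x)>f(\bar{x})$, since $S$ is closed, so divisions by $t$ below are legitimate.

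For (a) I would feed the LHEB hypothesis (iii), namely $t^{\gamma_3}\leq\mu_3\big(f(x)-f(\bar{x})\big)$, into $(\star)$ to obtain $t^{\gamma_3}\leq\mu_3 s\,t+\frac{\mu_3\rho}{2}t^2$, and divide by $t$:
\[
t^{\gamma_3-1}\leq\mu_3 s+\frac{\mu_3\rho}{2}t.
\]
If $1\leq\gamma_3<2$, put $\gamma_2:=\gamma_3-1\in[0,1)$ and write $t=t^{1-\gamma_2}t^{\gamma_2}$; because $t\to 0$ as $x\to\bar{x}$, shrinking the neighbourhood makes $\frac{\mu_3\rho}{2}t^{1-\gamma_2}<1$, whence $t^{\gamma_2}\leq\frac{\mu_3}{1-\frac{\mu_3\rho}{2}t^{1-\gamma_2}}\,s$ and the coefficient tends to $\mu_3$, giving the LSEB (ii) with exponent $\gamma_2=\gamma_3-1$ and any constant $\mu_2>\mu_3$. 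If $\gamma_3=2$ and $\mu_3<2/\rho$, put $\gamma_2:=1$; then $t\big(1-\frac{\mu_3\rho}{2}\big)\leq\mu_3 s$ with $1-\frac{\mu_3\rho}{2}>0$, so $t\leq\frac{2\mu_3}{2-\rho\mu_3}s$, i.e.\ the LSEB with $\gamma_2=1$ and $\mu_2=\frac{2\mu_3}{2-\rho\mu_3}$.

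For (b) I would instead feed the LSEB hypothesis (ii) into $(\star)$. Assuming $\gamma_2>0$, LSEB gives $t\leq\mu_2^{1/\gamma_2}s^{1/\gamma_2}$, and substituting into $(\star)$ yields
\[
f(x)-f(\bar{x})\leq\mu_2^{1/\gamma_2}s^{\frac{\gamma_2+1}{\gamma_2}}+\frac{\rho}{2}\mu_2^{2/\gamma_2}s^{\frac{2}{\gamma_2}}.
\]
For $s\leq 1$ the term with the larger exponent is dominated by the one with the smaller exponent, and the smaller of $\frac{\gamma_2+1}{\gamma_2}$ and $\frac{2}{\gamma_2}$ equals $1/\gamma_1$ with $\gamma_1=\max\{\frac{\gamma_2}{1+\gamma_2},\frac{\gamma_2}{2}\}$ (distinguishing $\gamma_2\leq1$ and $\gamma_2\geq1$). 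Hence $f(x)-f(\bar{x})\leq\big(\mu_2^{1/\gamma_2}+\frac{\rho}{2}\mu_2^{2/\gamma_2}\big)s^{1/\gamma_1}$, and raising to the power $\gamma_1$ gives $\big(f(x)-f(\bar{x})\big)^{\gamma_1}\leq\mu_1\,d(0,\partial f(x))$ with $\mu_1=\big(\mu_2^{1/\gamma_2}+\frac{\rho}{2}\mu_2^{2/\gamma_2}\big)^{\gamma_1}$, which is exactly the KL property (i).

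The delicate part, and the main obstacle, is the bookkeeping ensuring that $(\star)$ is legitimately applicable, since prox-regularity requires $\|v\|<\varepsilon$, i.e.\ $s<\varepsilon$. I would therefore split the analysis. In the regime $s<\varepsilon$ (and, for (b), $s\leq1$) the computations above apply verbatim on a sufficiently small $B(\bar{x},\eta)\cap[f(\bar{x})<f<f(\bar{x})+\nu]$. In the complementary regime $s\geq\varepsilon$ the right-hand sides $\mu_2 s$ and $\mu_1 s$ are bounded below by a positive constant, while $d(x,S)\leq\|x-\bar{x}\|\to0$ (so $d^{\gamma_2}(x,S)\to0$ when $\gamma_2>0$) and $f(x)-f(\bar{x})\to0$, so shrinking $\eta$ and $\nu$ forces the desired inequalities to hold trivially. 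The only genuinely degenerate situation is the exponent $\gamma_2=0$ in (a) (i.e.\ $\gamma_3=1$), where $d^{0}(x,S)=1$ does not vanish and one must in addition take $\mu_2\geq1/\varepsilon$. Beyond the case split, one must check that a single choice of $\eta,\nu$ simultaneously keeps $x$ and its projection $\bar{z}$ in $B(\bar{x},\varepsilon)$ and keeps $f(x)<f(\bar{x})+\varepsilon$, and must track the constants cleanly through the two regimes $\gamma_2\leq1$ and $\gamma_2\geq1$.
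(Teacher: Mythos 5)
Your argument is correct, and for part (b) it rests on exactly the same two ingredients as the paper's proof: apply the prox-regularity inequality at a projection $\bar z\in\textrm{Proj}_{\textrm{lev}_{\leq f(\bar x)}f}(x)$ to get $d(0,\partial f(x))\geq \frac{f(x)-f(\bar x)}{t}-\frac{\rho}{2}t$ with $t=d(x,\textrm{lev}_{\leq f(\bar x)}f)$, then use the LSEB to replace $t$ by $\mu_2^{1/\gamma_2}d(0,\partial f(x))^{1/\gamma_2}$ and compare the exponents $\frac{1+\gamma_2}{\gamma_2}$ and $\frac{2}{\gamma_2}$. The difference is organizational: the paper runs this computation inside a proof by contradiction, starting from a sequence $x_k$ violating the KL inequality, which automatically forces $d(0,\partial f(x_k))\to 0$ and so silently discharges the constraint $\|v\|<\varepsilon$ required by prox-regularity; you instead prove the master inequality $(\star)$ directly and must therefore handle the regime $d(0,\partial f(x))\geq\varepsilon$ by a separate (trivial) case, which you do correctly, including the genuinely degenerate subcase $\gamma_2=0$ where $d^0(x,S)=1$ does not vanish. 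For part (a) the paper gives no proof at all (it defers to Theorem 2.1 of Bai et al.), so your derivation -- feeding the LHEB into $(\star)$, dividing by $t>0$, and absorbing the $\frac{\mu_3\rho}{2}t$ term either asymptotically (for $1\leq\gamma_3<2$, yielding any $\mu_2>\mu_3$) or via the hypothesis $\mu_3<2/\rho$ (for $\gamma_3=2$, yielding $\mu_2=\frac{2\mu_3}{2-\rho\mu_3}$) -- is a useful self-contained supplement that reproduces the stated exponents and constants. Both approaches implicitly require $\gamma_2>0$ in part (b), since the constant $\mu_1$ involves $\mu_2^{1/\gamma_2}$.
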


\begin{proof} (a) The proof is similar to Theorem 2.1 in \cite{Bai 1} and we omit it.
(b) Assume that $f$ does not have the KL property at $\bar{x}$ with the exponent $\gamma_1=\max\{\frac{\gamma_2}{1+\gamma_2},\frac{\gamma_2}{2}\}$ and the constant $\mu_1= (\mu_2^{\frac{1}{\gamma_2}}+\frac{\rho}{2}\mu_2^{\frac{2}{\gamma_2}})^{\max\{\frac{\gamma_2}{1+\gamma_2},\frac{\gamma_2}{2}\}}$. Then there exists a sequence $\{x_k\}$ with $x_k\rightarrow \bar{x}$ and $f(x_k)\downarrow f(\bar{x})$ such that
\begin{eqnarray}
\big(f(x_k)-f(\bar{x})\big)^{\gamma_1}> \mu_1 d\big(0,\partial f(x_k)\big).\label{t5}
\end{eqnarray}
Since $\partial f(x_k)$ is closed, we can choose that $v_k\in \partial f(x_k)$ with $d\big(0,\partial f(x_k)\big)=\|v_k\| \rightarrow 0$. Then by (\ref{t5}) one has
\begin{eqnarray}
\|v_k\|^{\max\{\frac{\gamma_2}{1+\gamma_2},\frac{\gamma_2}{2}\}^{-1}}<\mu_1^{-\max\{\frac{\gamma_2}{1+\gamma_2},\frac{\gamma_2}{2}\}^{-1}} \big(f(x_k)-f(\bar{x})\big).   \label{t5.5}
\end{eqnarray}
Since $f$ is prox-regular at $\bar{x}$ for $\bar{v}=0$ with a constant $\rho\geq 0$, there exist $\varepsilon>0$ such that
\begin{eqnarray}
   f(y)\geq f(x) +\langle v,y-x\rangle-\frac{\rho}{2}\|y-x\|^2  \,\, \forall y\in B(\bar{x},\varepsilon)\nonumber
\end{eqnarray}
when $v\in \partial f(x)\cap B(0,\varepsilon)$, $x\in B(\bar{x},\varepsilon)$ with $f(x)<f(\bar{x})+\varepsilon$. Let $y_k\in \textrm{Proj}_{\textrm{lev}_{\leq f(\bar{x})}f}(x_k)$. Then we have $y_k\rightarrow \bar{x}$ and $\|y_k-x_k\|\rightarrow 0$ as $k\rightarrow \infty$ due to
$\|y_k-\bar{x}\|\leq \|y_k-x_k\|+\|x_k-\bar{x}\|\leq 2\|x_k-\bar{x}\|$ and $x_k\rightarrow \bar{x}$.
Then there exists $k$ large enough such that $x_k,y_k\in B(\bar{x},\varepsilon)$, $f(x_k)<f(\bar{x})+\varepsilon$, $v_k\in \partial f(x_k)$ with $d\big(0,\partial f(x_k)\big)=\|v_k\|<\varepsilon$ and
\begin{eqnarray}
   f(y_k)\geq f(x_k) +\langle v_k,y_k-x_k\rangle-\frac{\rho}{2}\|y_k-x_k\|^2.\nonumber
\end{eqnarray}
Then we have
\begin{equation}
\aligned d\big(0,\partial f(x_k)\big)&\geq\frac{f(x_k)-f(y_k)}{\|y_k-x_k\|}-\frac{\rho}{2}\|y_k-x_k\|\nonumber\\
&\geq\frac{f(x_k)-f(\bar{x})}{\|y_k-x_k\|}-\frac{\rho}{2}\|y_k-x_k\|.\label{t7}
\endaligned
\end{equation}
Since $f$ satisfies the LSEB at $\bar{x}$ with an exponent $\gamma_2\geq 0$ and a constant $\mu_2> 0$, we can choose suitable neighborhood of $\bar{x}$ such that
\begin{eqnarray}
d^{\gamma_2}(x_k,\textrm{lev}_{\leq f(\bar{x})}f)=\|y_k-x_k\|^{\gamma_2}\leq \mu_2 d\big(0,\partial f(x_k)\big)=\mu_2\|v_k\|,\nonumber
\end{eqnarray}
which means that $\|y_k-x_k\|\leq \mu_2^{\frac{1}{\gamma_2}} \|v_k\|^{\frac{1}{\gamma_2}}$. Combining (\ref{t5.5}), (\ref{t7}), $\gamma_1=\max\{\frac{\gamma_2}{1+\gamma_2},\frac{\gamma_2}{2}\}$ and $\mu_1= (\mu_2^{\frac{1}{\gamma_2}}+\frac{\rho}{2}\mu_2^{\frac{2}{\gamma_2}})^{\max\{\frac{\gamma_2}{1+\gamma_2},\frac{\gamma_2}{2}\}}$, one has
\begin{equation}
\aligned  f(x_k)-f(\bar{x})&\leq  \mu_2^{\frac{1}{\gamma_2}}\|v_k\|^{\frac{1+\gamma_2}{\gamma_2}}+\frac{\rho}{2}\mu_2^{\frac{2}{\gamma_2}}\|v_k\|^{\frac{2}{\gamma_2}}\nonumber\\
 &\leq\big(\mu_2^{\frac{1}{\gamma_2}}+\frac{\rho}{2}\mu_2^{\frac{2}{\gamma_2}}\big)\|v_k\|^{\min\{\frac{1+\gamma_2}{\gamma_2},\frac{2}{\gamma_2}\}}\nonumber\\
 &=\big(\mu_2^{\frac{1}{\gamma_2}}+\frac{\rho}{2}\mu_2^{\frac{2}{\gamma_2}}\big)\|v_k\|^{\max\{\frac{\gamma_2}{1+\gamma_2},\frac{\gamma_2}{2}\}^{-1}}\nonumber\\
 &<\big(\mu_2^{\frac{1}{\gamma_2}}+\frac{\rho}{2}\mu_2^{\frac{2}{\gamma_2}}\big)\mu_1^{-\max\{\frac{\gamma_2}{1+\gamma_2},\frac{\gamma_2}{2}\}^{-1}} \big(f(x_k)-f(\bar{x})\big)\nonumber\\
 &=f(x_k)-f(\bar{x}),\nonumber
\endaligned
\end{equation}
which is a contradiction. The proof is completed.  \end{proof}

\begin{remark}
Since the weakly convex functions are prox-regular functions, Proposition \ref{prop 2.1} generalizes Theorem 2.1 in \cite{Bai 1}. And we improve the proof of Theorem 2.1 (e) and (f) in \cite{Bai 1} so that the LSEB exponent is not bounded by the weakly convex exponent.
\end{remark}

Now we consider the equivalence among u-KL property, u-LSEB and u-HEB under suitable assumptions.

\begin{theorem} Let $f:\mathbb{R}^n \rightarrow \bar{\mathbb{R}}$ be a proper and closed function and $\Omega \subseteq \textrm{dom}\,\partial f$ be a nonempty and compact set. Suppose that $f$ takes a constant value on $\Omega$. Consider the following conditions:
\begin{enumerate}
  \item [(i)] $f$ satisfies the u-KL property on $\Omega$ with an exponent $0\leq \gamma_1<1$ and a constant $\mu_1> 0$;
  \item [(ii)] $f$ satisfies the u-LSEB on $\Omega$ with an exponent $\gamma_2\geq 0$ and a constant $\mu_2> 0$;
  \item [(iii)] $f$ satisfies the u-HEB on $\Omega$ with an exponent $\gamma_3> 0$ and a constant $\mu_3> 0$.
\end{enumerate}
Then, we have the following results:
\begin{enumerate}
  \item [(a)] (i) $\Rightarrow$ (ii) with the exponent $\gamma_2=\frac{\gamma_1}{1-\gamma_1}$ and the constant $\mu_2=(1-\gamma_1)^{\frac{-\gamma_1}{1-\gamma_1}}\mu_1^{\frac{1}{1-\gamma_1}}$;
  \item [(b)] (ii) $\Rightarrow$ (iii) with the exponent $\gamma_3=\gamma_2+1$ and the constant $\mu_3=\frac{(\gamma_2+1)^{\gamma_2+1}}{\gamma_2^{\gamma_2}}\mu_2$.
\end{enumerate}
Moreover, if $f$ is prox-regular at every point of $\Omega$ with a constant $\rho\geq 0$, then we have the following results:
\begin{enumerate}
  \item [(c)] If $1\leq \gamma_3 <2 $, (iii) $\Rightarrow$ (ii) with the exponent $\gamma_2=\gamma_3-1$ and the constant $\mu_2>\mu_3$; If $\gamma_3= 2$ and $\frac{\rho}{2}<\frac{1}{\mu_3}$, (iii) $\Rightarrow$ (ii) with the exponent $\gamma_2=1$ and the constant $\mu_2>\frac{2\mu_3}{2-\rho\mu_3}$.
  \item [(d)](ii) $\Rightarrow$ (i) with the exponent $\gamma_1=\max\{\frac{\gamma_2}{1+\gamma_2},\frac{\gamma_2}{2}\}$ and the constant $\mu_1=(\mu_2^{\frac{1}{\gamma_2}}+\frac{\rho}{2}\mu_2^{\frac{2}{\gamma_2}})^{\max\{\frac{\gamma_2}{1+\gamma_2},\frac{\gamma_2}{2}\}}$.
\end{enumerate}
\end{theorem}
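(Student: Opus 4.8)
The plan is to prove each of (a)--(d) by reducing it to the corresponding pointwise implication already available in the excerpt and then re-uniformizing via the compactness of $\Omega$. Write $c$ for the constant value of $f$ on $\Omega$, so that $\textrm{lev}_{\leq f(\bar{x})}f=\textrm{lev}_{\leq c}f$ is the \emph{same} set for every $\bar{x}\in\Omega$. The crucial elementary observation is that each uniform property localizes to a pointwise one at every base point: fixing $\bar{x}\in\Omega$ and taking $x\in B(\bar{x},\varepsilon)$ gives $d(x,\Omega)\leq\|x-\bar{x}\|<\varepsilon$, so the defining inequality of the u-KL (resp. u-LSEB, u-HEB) property, read at this $\bar{x}$, reduces to the pointwise KL (resp. LSEB, LHEB) inequality at $\bar{x}$ --- over $\mathfrak{B}(\bar{x},\varepsilon,\nu)$ in the first two cases and over $\{x\in B(\bar{x},\varepsilon):f(x)>c\}$ in the last. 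Hence each uniform hypothesis supplies the associated pointwise property at \emph{every} point of $\Omega$, and --- this is what makes the exponents and constants come out as stated --- with one and the same exponent and constant, since those quantities depend only on the global data.

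For (a) I would first pass from the u-KL property to the pointwise KL property at each $\bar{x}\in\Omega$, then apply the first assertion of Lemma \ref{{Bai 1} Theorem 2.1 (b)} to obtain the LSEB at each point with the single exponent $\gamma_2=\frac{\gamma_1}{1-\gamma_1}$ and constant $\mu_2=(1-\gamma_1)^{-\gamma_1/(1-\gamma_1)}\mu_1^{1/(1-\gamma_1)}$, and finally invoke Lemma \ref{Lemma 1} to lift this to u-LSEB on $\Omega$. Because the pointwise exponent and constant are identical at every point, the maxima produced by Lemma \ref{Lemma 1} collapse to exactly $\gamma_2$ and $\mu_2$. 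Part (b) has the identical three-step shape: u-LSEB $\Rightarrow$ pointwise LSEB at every point $\Rightarrow$ (the ``furthermore'' LSEB $\Rightarrow$ LHEB implication of Lemma \ref{{Bai 1} Theorem 2.1 (b)}) pointwise LHEB with $\gamma_3=\gamma_2+1$ and $\mu_3=\frac{(\gamma_2+1)^{\gamma_2+1}}{\gamma_2^{\gamma_2}}\mu_2$ $\Rightarrow$ (Lemma \ref{Lemma 2}) u-HEB.

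For (c) and (d) the same template applies, now with Proposition \ref{prop 2.1} replacing Lemma \ref{{Bai 1} Theorem 2.1 (b)}; the extra hypothesis that $f$ is prox-regular at every point of $\Omega$ with the single constant $\rho$ is precisely what makes Proposition \ref{prop 2.1} applicable, with uniform data, at each $\bar{x}\in\Omega$. Thus for (c) I would chain u-HEB $\Rightarrow$ pointwise LHEB $\Rightarrow$ (Proposition \ref{prop 2.1}(a), treating the regimes $1\leq\gamma_3<2$ and $\gamma_3=2$ with $\frac{\rho}{2}<\frac{1}{\mu_3}$ separately) pointwise LSEB with $\gamma_2=\gamma_3-1$, resp. $\gamma_2=1$ $\Rightarrow$ (Lemma \ref{Lemma 1}) u-LSEB; and for (d) I would chain u-LSEB $\Rightarrow$ pointwise LSEB $\Rightarrow$ (Proposition \ref{prop 2.1}(b)) pointwise KL with $\gamma_1=\max\{\frac{\gamma_2}{1+\gamma_2},\frac{\gamma_2}{2}\}$ and $\mu_1=(\mu_2^{1/\gamma_2}+\frac{\rho}{2}\mu_2^{2/\gamma_2})^{\gamma_1}$ $\Rightarrow$ (the uniformization of the KL property, \cite[Lemma 2.2]{Li 6}, which is the KL analogue of Lemmas \ref{Lemma 1} and \ref{Lemma 2}) the u-KL property.

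The genuinely delicate part is not any new estimate --- all the analytic content already sits in Lemma \ref{{Bai 1} Theorem 2.1 (b)} and Proposition \ref{prop 2.1} --- but the bookkeeping that keeps the advertised exponents and constants intact through the covering step. The covering lemmas emit the maximum of the pointwise exponents and constants over a finite subcover, so I must ensure that the pointwise implication used in each part yields literally the same exponent and constant at every base point; this holds because those quantities are functions only of $(\gamma_1,\mu_1)$, $(\gamma_2,\mu_2)$ and the single $\rho$, never of $\bar{x}$. I would also verify at each step the standing hypotheses of the result being invoked: $\Omega$ nonempty and compact, $f$ constant on $\Omega$ (so that the level set is base-point independent), the relevant domain inclusion, and --- for (c) and (d) --- a common prox-regularity constant $\rho$.
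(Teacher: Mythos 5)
Your proposal is correct and follows essentially the same route as the paper: localize each uniform hypothesis to the corresponding pointwise property at every $\bar{x}\in\Omega$, apply the pointwise implications of Lemma~\ref{{Bai 1} Theorem 2.1 (b)} (for (a),(b)) and Proposition~\ref{prop 2.1} (for (c),(d)), and re-uniformize via the covering lemmas --- your explicit observation that the maxima produced by Lemmas~\ref{Lemma 1} and~\ref{Lemma 2} collapse because the pointwise exponents and constants are identical at every base point is precisely the bookkeeping the paper's one-line proof leaves implicit. The only divergence is in (d), where you correctly route the final uniformization through the u-KL covering result of \cite[Lemma 2.2]{Li 6}, whereas the paper's proof cites Lemma~\ref{Lemma 2} (the LHEB covering lemma), which appears to be a slip; your version is the right one.
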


\begin{proof} By Lemmas \ref{Lemma 1} and \ref{{Bai 1} Theorem 2.1 (b)}, (a) is established immediately. (b) holds from the Lemmas \ref{Lemma 2} and \ref{{Bai 1} Theorem 2.1 (b)}. (c) holds from Proposition \ref{prop 2.1} (a) and Lemma \ref{Lemma 1}, and (d) follows from Proposition \ref{prop 2.1} (b) and Lemma \ref{Lemma 2}.\end{proof}

\section{Behavior of error bounds via Moreau envelopes}\label{4}

In this section, we consider the Moreau envelopes and proximal mappings which defined in \cite[Definition 1.22]{Rockafellar}. For a proper and closed function $f : \mathbb{R }^n \rightarrow \bar{\mathbb{R}}$ and parameter $\lambda> 0$, the Moreau envelope $e_{\lambda }f$ and proximal mapping $P_{\lambda}f(x)$ are defined, respectively, by
$$e_{\lambda }f(x):=\inf_{y\in \mathbb{R}^n}\big\{f(y)+\frac{1}{2\lambda}\parallel y-x \parallel^2\big\}\leq f(x),$$
\begin{eqnarray}
P_{\lambda}f(x):=\arg \min _{y\in \mathbb{R}^n}\big\{f(y)+\frac{1}{2\lambda}\parallel y-x \parallel^2\big\}.\nonumber
\end{eqnarray}

A function $f : \mathbb{R }^n \rightarrow \bar{\mathbb{R}}$ is prox-bounded if there exists $\lambda > 0$ such that $e_{\lambda }f(x) > -\infty$ for some $x \in \mathbb{R }^n  $. The supremum
of the set of all such $\lambda$ is the threshold $\lambda_f$ of prox-boundedness for $f$, see \cite[Definition 1.23]{Rockafellar}.

It immediately follows from Lemma 2.1 in \cite{Li 5} and Lemma \ref{{Bai 1} Theorem 2.1 (b)} that $e_{\lambda}f$ satisfies the KL property with an exponent 0, the LSEB with an exponent 0 and the LHEB with an exponent 1 at $x$ with $0\notin \partial e_{\lambda}f(x)$. So we only consider the point $\bar{x}$ with $0\in \partial e_{\lambda}f(\bar{x})$ in what follows.

A local minimum occurs at $\bar{x}\in \textrm{dom}\,f$ if $f(x) \geq f(\bar{x})$ for all $x \in V$, where $V$ is a neighborhood of $\bar{x}$. At first, we consider the behavior of LSEB via Moreau envelopes.

\begin{theorem} \label{th4.1}
Let $f:\mathbb{R}^n \rightarrow \bar{\mathbb{R}}$ be a proper and closed function and prox-bounded with threshold $\lambda_f$. Assume that the following conditions hold:
\begin{enumerate}
  \item [(i)] For $\lambda\in (0, \lambda_f )$, $\bar{x}\in \textrm{dom}\,f$ with $0\in \partial e_{\lambda}f(\bar{x})$;
  \item [(ii)] $f$ satisfies the LSEB at $\bar{x}$ with an exponent $\gamma> 0$ and a constant $\mu>0$.
\end{enumerate}
Then $e_{\lambda }f$ satisfies the LSEB at $\bar{x}$ with an exponent $\max\{1,\gamma\}$ and a constant $\lambda\big(1+(\frac{\mu}{\lambda})^{\gamma^{-1}}\big)^{\max\{1,\gamma\}}$.
\end{theorem}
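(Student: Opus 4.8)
The plan is to transport the level-set error bound from $f$ to $e_\lambda f$ by pushing a given point $x$ to a proximal point $p\in P_\lambda f(x)$, invoking hypothesis (ii) there, and pulling the estimate back. First I would record the structural facts about the Moreau envelope that carry the argument (these are essentially the content of Lemma 2.1 in \cite{Li 5} together with the standard calculus of $e_\lambda f$): for $\lambda\in(0,\lambda_f)$ both $\partial e_\lambda f(x)$ and $P_\lambda f(x)$ are nonempty, every $v\in\partial e_\lambda f(x)$ has the form $v=\frac{1}{\lambda}(x-p)$ with $p\in P_\lambda f(x)$, and for such $p$ the first-order optimality of the proximal subproblem gives $\frac{1}{\lambda}(x-p)\in\partial f(p)$ while $e_\lambda f(x)=f(p)+\frac{1}{2\lambda}\|x-p\|^2$. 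Selecting $v^*$ realizing $d(0,\partial e_\lambda f(x))$ and the associated $p^*\in P_\lambda f(x)$, I obtain the two identities used throughout: $d(0,\partial e_\lambda f(x))=\frac{1}{\lambda}\|x-p^*\|$ and $d(0,\partial f(p^*))\le\|v^*\|=\frac{1}{\lambda}\|x-p^*\|$.

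Next I would fix the geometry at $\bar x$. From $0\in\partial e_\lambda f(\bar x)$ the structure above forces $\bar x\in P_\lambda f(\bar x)$, hence $e_\lambda f(\bar x)=f(\bar x)$ and $0\in\partial f(\bar x)$, so hypothesis (ii) is meaningful at $\bar x$. Since $e_\lambda f\le f$ and $e_\lambda f(\bar x)=f(\bar x)$, the sublevel sets nest as $L_f:=\textrm{lev}_{\le f(\bar x)}f\subseteq\textrm{lev}_{\le e_\lambda f(\bar x)}e_\lambda f=:L_e$, so in particular $\bar x\in L_e$ and, for every $x$, the triangle inequality gives $d(x,L_e)\le d(x,L_f)\le\|x-p^*\|+d(p^*,L_f)$. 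Whenever $p^*$ lies in the LSEB neighbourhood of $\bar x$ I can then bound $d(p^*,L_f)$: if $f(p^*)>f(\bar x)$, hypothesis (ii) yields $d^\gamma(p^*,L_f)\le\mu\,d(0,\partial f(p^*))\le\frac{\mu}{\lambda}\|x-p^*\|$, hence $d(p^*,L_f)\le(\mu/\lambda)^{1/\gamma}\|x-p^*\|^{1/\gamma}$, while if $f(p^*)\le f(\bar x)$ then $p^*\in L_f$ and $d(p^*,L_f)=0$ so the same bound holds trivially. Writing $t:=\|x-p^*\|$ this produces $d(x,L_e)\le t+(\mu/\lambda)^{1/\gamma}t^{1/\gamma}$.

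The final step is the elementary power manipulation that generates exactly the stated exponent and constant. For $t\le1$ one has $t^{1/\gamma}\le t$ when $\gamma\le1$ and $t\le t^{1/\gamma}$ when $\gamma>1$, so factoring out the dominant power gives $d(x,L_e)\le\big(1+(\mu/\lambda)^{1/\gamma}\big)\,t^{\min\{1,1/\gamma\}}$. Raising both sides to the power $\max\{1,\gamma\}$ collapses the exponent, since $\min\{1,1/\gamma\}\cdot\max\{1,\gamma\}=1$ in both cases, leaving $t$ on the right; and because $t=\lambda\,d(0,\partial e_\lambda f(x))$ this is precisely $d^{\max\{1,\gamma\}}(x,L_e)\le\lambda\big(1+(\mu/\lambda)^{1/\gamma}\big)^{\max\{1,\gamma\}}\,d(0,\partial e_\lambda f(x))$, the asserted inequality with the advertised constant.

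It remains to choose the neighbourhood $B(\bar x,\eta')\cap[f(\bar x)<e_\lambda f<f(\bar x)+\nu']$ legitimizing the steps above, and this is where I expect the main difficulty. The clean estimate needs $t$ small and $p^*\in B(\bar x,\eta)$ with $f(p^*)<f(\bar x)+\nu$, which would follow from convergence $p^*(x)\to\bar x$; but without single-valuedness of $P_\lambda f$ at $\bar x$ this convergence can fail. I would sidestep this by a dichotomy. If $t=\|x-p^*\|$ is small and $p^*$ is close to $\bar x$, the argument above applies, and shrinking $\nu'\le\nu$ excludes $f(p^*)\ge f(\bar x)+\nu$ because $f(p^*)\le e_\lambda f(x)<f(\bar x)+\nu'$. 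Otherwise $t$ is bounded below by a positive constant (e.g. $p^*\notin B(\bar x,\eta)$ forces $t\ge\eta-\eta'$), so $d(0,\partial e_\lambda f(x))=t/\lambda$ stays away from $0$ while $d(x,L_e)\le\|x-\bar x\|<\eta'$ is small, and the inequality holds trivially once $\eta'$ is chosen small enough. Checking that these two regimes cover a full neighbourhood of $\bar x$, and tracking constants so the trivial regime does not degrade the stated constant, is the part demanding the most care.
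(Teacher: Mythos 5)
Your proposal is correct and follows essentially the same route as the paper's proof: push $x$ to a nearest proximal point $y\in P_{\lambda}f(x)$, use $\frac{1}{\lambda}(x-y)\in\partial f(y)$ together with the LSEB of $f$ at $y$ (or $y\in\mathrm{lev}_{\le f(\bar x)}f$ when $f(y)\le f(\bar x)$), then the elementary power manipulation with $t=\|x-y\|\le 1$ and the nesting $\mathrm{lev}_{\le f(\bar x)}f\subseteq\mathrm{lev}_{\le e_{\lambda}f(\bar x)}e_{\lambda}f$, arriving at exactly the stated exponent and constant. The one point where you diverge is the localization of $y$, and the dichotomy you set up (and flag as the delicate part) is in fact unnecessary: in the only case where the LSEB of $f$ must be invoked at $y$, namely $f(y)>f(\bar x)$, comparing the proximal objective at $y$ with its value at $\bar x$ gives $f(y)+\frac{1}{2\lambda}\|y-x\|^{2}\le f(\bar x)+\frac{1}{2\lambda}\|\bar x-x\|^{2}$, and $f(y)>f(\bar x)$ then forces $\|y-x\|<\|x-\bar x\|$, hence $\|y-\bar x\|<2\|x-\bar x\|<\eta$ once $x\in B(\bar x,\eta/2)$; combined with $f(y)\le e_{\lambda}f(x)<f(\bar x)+\nu$ this places $y$ in the LSEB neighbourhood automatically, with no far regime to handle. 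Your fallback for the far regime is nonetheless workable (the stated constant survives for $\eta'$ small enough, since the left side is $O((\eta')^{\max\{1,\gamma\}})$ while the right side stays bounded below), so the argument closes either way.
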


\begin{proof} For any $\bar{x}\in \textrm{dom}\,f$ with $0\in \partial e_{\lambda}f(\bar{x})$, we have $0 \in \lambda^{-1}\big(\bar{x}-P_{\lambda}f(\bar{x})\big)$ by the condition (i) and Example 10.32 in \cite{Rockafellar}. Then we get $\bar{x}\in P_{\lambda}f(\bar{x})$ and $f(\bar{x})= e_{\lambda }f(\bar{x})$.

Next, since $f$ satisfies the LSEB at $\bar{x}$ with an exponent $\gamma> 0$ and a constant $\mu>0$, there exist $\eta,\nu >0$ such that
\begin{equation}
d^{\gamma }(x,\textrm{lev}_{\leq f(\bar{x})}f)\leq \mu d\big(0,\partial f(x)\big)\,\,\forall x\in \mathfrak{B}(\bar{x},\eta,\nu). \label{4.1}
\end{equation}

For any $x\in B(\bar{x},\tilde{\eta})$ with $e_{\lambda }f(\bar{x})<e_{\lambda }f(x)<e_{\lambda }f(\bar{x})+\tilde{\nu}$, where $\tilde{\eta}\in (0,\frac{\eta}{2})$ and $\tilde{\nu}\in (0,\nu)$, there exists a point $y\in P_{\lambda}f(x)$ such that $d\big(x,P_{\lambda}f(x)\big)=\parallel x-y \parallel$. Obviously, one has
$$f(y)\leq e_{\lambda }f(x)<e_{\lambda }f(\bar{x})+\tilde{\nu}<f(\bar{x})+\nu$$
 and $\frac{1}{\lambda}(x-y)\in \partial f(y)$.
If $f(y)\leq f(\bar{x})$, it means that $y\in\textrm{lev}_{\leq f(\bar{x})}f$ and
\begin{equation}
d(x,\textrm{lev}_{\leq f(\bar{x})}f)\leq  \parallel x-y \parallel .\label{4.2}
\end{equation}
If $f(\bar{x})< f(y)<f(\bar{x})+\nu$, note that $y\in P_{\lambda}f(x)$, we have
$$f(y)+\frac{1}{2\lambda}\parallel y-x \parallel^2\leq f(\bar{x})+\frac{1}{2\lambda}\parallel \bar{x}-x \parallel^2.$$
It follows from $f(\bar{x})<f(y)$ and $\lambda>0$ that we have $\parallel y-x \parallel< \parallel x-\bar{x} \parallel<\frac{\eta}{2}$. Hence we have $\parallel y-\bar{x} \parallel\leq \parallel y-x  \parallel+\parallel x-\bar{x} \parallel<\eta,$ which means that $y\in \mathfrak{B}(\bar{x},\eta,\nu)$ and (\ref{4.1}) holds with $y$. Then for any such $x$, we have
\begin{align}
d(x,\textrm{lev}_{\leq f(\bar{x})}f)\leq& \parallel x-y\parallel+d(y,\textrm{lev}_{\leq f(\bar{x})}f)\nonumber\\
\leq&\parallel x-y\parallel+ \big(\mu d\big(0,\partial f(y)\big)\big)^{\gamma^{-1}}\nonumber\\
\leq& \parallel x-y\parallel+\big(\frac{\mu}{\lambda}\parallel x-y\parallel \big)^{\gamma^{-1}}. \label{4.3}
\end{align}
The last inequality holds with $\frac{1}{\lambda}(x-y)\in \partial f(y)$. Combining (\ref{4.2}) and (\ref{4.3}), for any $x\in B(\bar{x},\tilde{\eta})$ with $e_{\lambda }f(\bar{x})<e_{\lambda }f(x)<e_{\lambda }f(\bar{x})+\tilde{\nu}$ and $y\in \textrm{Proj}_{P_{\lambda}f(x)}(x)$, one has
\begin{align}
d(x,\textrm{lev}_{\leq f(\bar{x})}f)\leq \parallel x-y\parallel+\big(\frac{\mu}{\lambda}\parallel x-y\parallel \big)^{\gamma^{-1}}.\nonumber
\end{align}

Shrink $\eta$ if necessary so that $\parallel x-y \parallel< \eta\leq 1$. If $0< \gamma <1$, we have
\begin{align}
d(x,\textrm{lev}_{\leq f(\bar{x})}f)\leq \big(1+(\frac{\mu}{\lambda})^{\gamma^{-1}}\big)\parallel x-y\parallel.\nonumber
\end{align}
Set $\tilde{\mu}:=\lambda\big(1+(\frac{\mu}{\lambda})^{\gamma^{-1}}\big)$. 
Then for any $x\in B(\bar{x},\tilde{\eta})$ with $e_{\lambda }f(\bar{x})<e_{\lambda }f(x)<e_{\lambda }f(\bar{x})+\tilde{\nu}$, one has
\begin{align}
\tilde{\mu} d\big(0,\partial e_{\lambda }f(x)\big)\geq& \tilde{\mu} d\big(0,\lambda^{-1}(x-P_{\lambda}f(x))\big)
=\frac{\tilde{\mu}}{\lambda}\parallel x-y \parallel  \nonumber\\
\geq& d(x,\textrm{lev}_{\leq f(\bar{x})}f) \nonumber\\
\geq& d(x,\textrm{lev}_{\leq e_{\lambda}f(\bar{x})}e_{\lambda}f).   \label{4.4}
\end{align}
The first inequality follows from $\partial e_{\lambda }f(x)\subseteq \lambda^{-1}\big(x-P_{\lambda}f(x)\big) $, see \cite[ Example 10.32]{Rockafellar}; The last inequality holds with the fact $e_{\lambda}f(z)\leq f(z)\leq f(\bar{x})=e_{\lambda}f(\bar{x})$ for any $z\in \textrm{lev}_{\leq f(\bar{x})}f$ and then
$\textrm{lev}_{f(\bar{x})}f\subseteq \textrm{lev}_{\leq e_{\lambda}f(\bar{x})}e_{\lambda}f$.

If $\gamma \geq 1$, one has $d^{\gamma}(x,\textrm{lev}_{\leq f(\bar{x})}f)\leq \big(1+(\frac{\mu}{\lambda})^{\gamma^{-1}}\big)^{\gamma}\parallel x-y\parallel$
for any $x\in B(\bar{x},\tilde{\eta})$ with $e_{\lambda }f(\bar{x})<e_{\lambda }f(x)<e_{\lambda }f(\bar{x})+\tilde{\nu}$. Take $\bar{\mu}:=\lambda\big(1+(\frac{\mu}{\lambda})^{\gamma^{-1}}\big)^{\gamma}$, then for every such $x$, one has
\begin{align}
\bar{\mu} d\big(0,\partial e_{\lambda }f(x)\big)\geq& \bar{\mu} d\big(0,\lambda^{-1}(x-P_{\lambda}f(x))\big)
=\frac{\bar{\mu}}{\lambda}\parallel x-y \parallel  \nonumber\\
\geq& d^\gamma(x,\textrm{lev}_{\leq f(\bar{x})}f) \nonumber\\
\geq& d^\gamma(x,\textrm{lev}_{\leq e_{\lambda}f(\bar{x})}e_{\lambda}f).   \label{4.5}
\end{align}
Combining (\ref{4.4}) and (\ref{4.5}), the proof is completed.
\end{proof}

\begin{remark}\rm
If condition (ii) in Theorem \ref{th4.1} is replaced as follows: $f$ satisfies the LSEB at $\bar{x}$ with an exponent $\gamma= 0$ and a constant $\mu>0$. It is easy to see that $e_{\lambda }f$ satisfies the LSEB at $\bar{x}$ with the exponent 1 and the constant $\mu+\lambda$.

\end{remark}

Under suitable assumptions, we now consider the behaviour of LHEB via Moreau envelopes.

\begin{theorem} \label{th4.2}
Let $f:\mathbb{R}^n \rightarrow \bar{\mathbb{R}}$ be a proper and closed function, and $\bar{x}\in \textrm{dom}\,f$ with $0\in \partial e_{\lambda}f(\bar{x})$. Assume that the following conditions hold:
\begin{enumerate}
  \item [(i)] $f$ is prox-bounded with threshold $\lambda_f$ and $\lambda\in (0, \lambda_f )$;
  \item [(ii)] $f$ satisfies the LHEB at $\bar{x}$ with an exponent $\gamma> 0$ and a constant $\mu\geq 2\lambda$.
  \item [(iii)] For $\bar{\varepsilon}>0$ and any $x\in  B(\bar{x},\bar{\varepsilon})$ with $f(x)> f(\bar{x})$, there exists a point $y\in P_{\lambda }f(x)$ such that $f(y)\geq f(\bar{x})$.
\end{enumerate}
Then $e_{\lambda }f$ satisfies the LHEB at $\bar{x}$ with an exponent $\max\{2,\gamma\}$ and a constant $2^{\max\{1,\gamma-1\}}\mu$.
\end{theorem}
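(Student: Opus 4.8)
The plan is to reduce the Hölder bound for $e_\lambda f$ to the one assumed for $f$, by passing through a proximal point of $x$. First I would record the consequences of $0\in\partial e_\lambda f(\bar x)$ exactly as in the proof of Theorem \ref{th4.1}: via Example 10.32 in \cite{Rockafellar} one gets $\bar x\in P_\lambda f(\bar x)$ and hence $e_\lambda f(\bar x)=f(\bar x)$. I would also note the level-set inclusion $\textrm{lev}_{\leq f(\bar x)}f\subseteq \textrm{lev}_{\leq e_\lambda f(\bar x)}e_\lambda f$, which holds because $e_\lambda f(z)\leq f(z)\leq f(\bar x)=e_\lambda f(\bar x)$ for every $z$ in the smaller level set. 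Since the LHEB is a statement about function values only, no subgradient information about $e_\lambda f$ is needed here; it suffices to control $e_\lambda f(x)-e_\lambda f(\bar x)$ from below and $d(x,\textrm{lev}_{\leq e_\lambda f(\bar x)}e_\lambda f)$ from above.

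The key device is to fix $x\in B(\bar x,\eta)$ with $e_\lambda f(x)>e_\lambda f(\bar x)$ and, using hypothesis (iii), choose a proximal point $y\in P_\lambda f(x)$ with $f(y)\geq f(\bar x)$. Writing $A:=f(y)-f(\bar x)\geq 0$ and using $e_\lambda f(x)=f(y)+\tfrac1{2\lambda}\|y-x\|^2$ together with $e_\lambda f(\bar x)=f(\bar x)$ yields the exact decomposition
\begin{equation}
e_\lambda f(x)-e_\lambda f(\bar x)=A+\tfrac1{2\lambda}\|y-x\|^2,\nonumber
\end{equation}
with both summands nonnegative. From the proximal inequality $f(y)+\tfrac1{2\lambda}\|y-x\|^2\leq f(\bar x)+\tfrac1{2\lambda}\|\bar x-x\|^2$ and $f(y)\geq f(\bar x)$ I get $\|y-x\|\leq\|x-\bar x\|$, hence $\|y-\bar x\|\leq 2\|x-\bar x\|$; shrinking $\eta$ therefore places $y$ inside the neighborhood where the LHEB for $f$ is valid. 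Moreover $A\leq e_\lambda f(x)-e_\lambda f(\bar x)$ and $\tfrac1{2\lambda}\|y-x\|^2\leq e_\lambda f(x)-e_\lambda f(\bar x)$, and continuity of $e_\lambda f$ forces $e_\lambda f(x)-e_\lambda f(\bar x)\to 0$ as $x\to\bar x$; this is what will let me shrink $\eta$ to absorb mismatched powers at the end.

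I would then split into two cases. If $f(y)=f(\bar x)$, then $y\in\textrm{lev}_{\leq f(\bar x)}f$, so $d(x,\textrm{lev}_{\leq e_\lambda f(\bar x)}e_\lambda f)\leq\|x-y\|$ while $e_\lambda f(x)-e_\lambda f(\bar x)=\tfrac1{2\lambda}\|y-x\|^2$, and the claimed bound follows at once from $\mu\geq 2\lambda$. If $f(y)>f(\bar x)$, I apply the LHEB for $f$ at $y$ to get $d(y,\textrm{lev}_{\leq f(\bar x)}f)\leq(\mu A)^{1/\gamma}$, so the triangle inequality and the level-set inclusion give
\begin{equation}
d(x,\textrm{lev}_{\leq e_\lambda f(\bar x)}e_\lambda f)\leq\|x-y\|+(\mu A)^{1/\gamma}.\nonumber
\end{equation}
Raising to the power $p:=\max\{2,\gamma\}$ and invoking $(a+b)^p\leq 2^{p-1}(a^p+b^p)$ (valid since $p\geq 1$, $a,b\geq 0$) produces precisely the factor $2^{p-1}=2^{\max\{1,\gamma-1\}}$, leaving me to dominate $\|x-y\|^p$ and $(\mu A)^{p/\gamma}$ by $\mu\tfrac1{2\lambda}\|y-x\|^2$ and $\mu A$, respectively.

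The hard part will be this last matching of exponents, and it is exactly where the neighborhood must be shrunk. When $\gamma\leq 2$ one has $p=2$: the term $\|x-y\|^2=2\lambda\cdot\tfrac1{2\lambda}\|y-x\|^2$ is controlled using $\mu\geq 2\lambda$, while $(\mu A)^{2/\gamma}\leq\mu A$ once $\mu A\leq 1$, which is arranged since $A\to 0$. When $\gamma>2$ one has $p=\gamma$: now $(\mu A)^{\gamma/\gamma}=\mu A$ is exact, whereas $\|x-y\|^\gamma=(2\lambda)^{\gamma/2}\bigl(\tfrac1{2\lambda}\|y-x\|^2\bigr)^{\gamma/2}$ must be dominated by $\mu\cdot\tfrac1{2\lambda}\|y-x\|^2$, which requires $\tfrac1{2\lambda}\|y-x\|^2$ small, again guaranteed because this quantity tends to $0$ and $\tfrac\gamma2-1>0$. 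Collecting both cases and both subcases against the decomposition $e_\lambda f(x)-e_\lambda f(\bar x)=A+\tfrac1{2\lambda}\|y-x\|^2$ then yields $d^{\max\{2,\gamma\}}(x,\textrm{lev}_{\leq e_\lambda f(\bar x)}e_\lambda f)\leq 2^{\max\{1,\gamma-1\}}\mu\,(e_\lambda f(x)-e_\lambda f(\bar x))$ on a sufficiently small ball, which is the asserted LHEB for $e_\lambda f$.
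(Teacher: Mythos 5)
Your proposal is correct and follows essentially the same route as the paper's proof: select $y\in P_{\lambda}f(x)$ with $f(y)\geq f(\bar{x})$ via (iii), use the proximal inequality to place $y$ in the LHEB neighborhood of $f$, then combine the triangle inequality with $(a+b)^p\leq 2^{p-1}(a^p+b^p)$ and absorb $\|y-x\|^p$ into $\frac{\mu}{2\lambda}\|y-x\|^2$ using $\mu\geq 2\lambda$ and the exact identity $e_{\lambda}f(x)=f(y)+\frac{1}{2\lambda}\|y-x\|^2$. Your explicit decomposition into $A+\frac{1}{2\lambda}\|y-x\|^2$ and the separate case $f(y)=f(\bar{x})$ are only cosmetic variations on the paper's argument.
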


\begin{proof} By the condition (i) and Example 10.32 in \cite{Rockafellar}, for any $\bar{x}\in \textrm{dom}\,f$ with $0\in \partial e_{\lambda}f(\bar{x})$, we have $\bar{x}\in P_{\lambda}f(\bar{x})$ and $f(\bar{x})= e_{\lambda }f(\bar{x})$.

Since $f$ satisfies the LHEB at $\bar{x}$ with an exponent $\gamma> 0$ and a constant $\mu\geq 2\lambda$, there exists $0<\eta<1$ such that
\begin{align}
d^{\gamma}(x,\textrm{lev}_{\leq f(\bar{x})}f)\leq \mu \big(f(x)-f(\bar{x})\big)\label{4.7}
\end{align}
for any $ x\in B(\bar{x},\eta)$ with $ f(\bar{x})\leq f(x)$.

Taking $\varepsilon :=\min\{\frac{\eta}{2},\bar{\varepsilon}\}$. For any $x\in B(\bar{x},\varepsilon)$ with $e_{\lambda }f(\bar{x})<e_{\lambda }f(x)$, by $e_{\lambda }f(x)\leq f(x)$ and condition (iii), there exists a point $y\in P_{\lambda}f(x)$ such that $f(y)\geq f(\bar{x})$ and 
$$f(y)+\frac{1}{2\lambda}\parallel y-x \parallel^2\leq f(\bar{x})+\frac{1}{2\lambda}\parallel \bar{x}-x \parallel^2,$$
which means that $\parallel y-x \parallel\leq  \parallel x-\bar{x} \parallel<\frac{\eta}{2}$. Then we have $\parallel y-\bar{x} \parallel\leq \parallel y-x  \parallel+\parallel x-\bar{x} \parallel<\eta$, hence (\ref{4.7}) holds with $y$. Then for any $x\in B(\bar{x},\varepsilon)$ with $e_{\lambda }f(\bar{x})<e_{\lambda }f(x)$ and $\gamma\geq 2$, we have
\begin{align}
 d^{\gamma}(x,\textrm{lev}_{\leq e_{\lambda}f(\bar{x})}e_{\lambda}f) \leq& d^{\gamma}(x,\textrm{lev}_{\leq f(\bar{x})} f) \leq \big(d(y,\textrm{lev}_{\leq f(\bar{x})} f)+\|y-x\| \big)^{{\gamma}}\nonumber\\
\leq& 2^{\gamma-1}\big(d^{\gamma}(y,\textrm{lev}_{\leq f(\bar{x})} f)+\|y-x\|^{\gamma} \big)\nonumber\\
\leq& 2^{\gamma-1}\big(\mu\big(f(y)-f(\bar{x})\big)    +\|y-x\|^{\gamma} \big)\nonumber\\
=& 2^{\gamma-1}\mu\big(f(y)+\frac{1}{\mu}\|y-x\|^{\gamma}-f(\bar{x})\big)\nonumber\\
\leq& 2^{\gamma-1}\mu\big(f(y)+\frac{1}{2\lambda}\|y-x\|^{2}-f(\bar{x})\big)\nonumber\\
=& 2^{\gamma-1}\mu\big(e_{\lambda}f(x)-e_{\lambda}f(\bar{x})\big),\label{4.8}
\end{align}
where the first inequality holds with the fact $\textrm{lev}_{f(\bar{x})}f\subseteq \textrm{lev}_{\leq e_{\lambda}f(\bar{x})}e_{\lambda}f$, which follows from $e_{\lambda}f(x)\leq f(x)$ and $e_{\lambda}f(\bar{x})= f(\bar{x})$. The third inequality holds with the fact $(a+b)^\alpha\leq 2^{\alpha-1}(a^\alpha+b^\alpha)$ while $a,b\geq 0$ and $\alpha\geq 1$, since $g:x\mapsto x^\alpha$ is a convex function on $[0,\infty)$ with $\alpha\geq 1$. The fourth inequality holds with (\ref{4.7}). The last inequality holds with the condition $\mu\geq 2\lambda$, $\|y-x\|<1$ and $\gamma\geq 2$.

If $0<\gamma < 2$, for any $x\in B(\bar{x},\varepsilon)$ with $e_{\lambda }f(\bar{x})<e_{\lambda }f(x)$, one has
\begin{align}
 d^{2}(x,\textrm{lev}_{\leq e_{\lambda}f(\bar{x})}e_{\lambda}f) \leq& d^{2}(x,\textrm{lev}_{\leq f(\bar{x})} f) \leq \big(d(y,\textrm{lev}_{\leq f(\bar{x})} f)+\|y-x\| \big)^{{2}}\nonumber\\
\leq& 2\big(d^{2}(y,\textrm{lev}_{\leq f(\bar{x})} f)+\|y-x\|^{2} \big)\nonumber\\
\leq& 2\big(d^{\gamma}(y,\textrm{lev}_{\leq f(\bar{x})} f)+\|y-x\|^{2} \big)\nonumber\\
\leq& 2\big(\mu\big(f(y)-f(\bar{x})\big)    +\|y-x\|^{2} \big)\nonumber\\
\leq& 2 \mu\big(f(y)+\frac{1}{2\lambda}\|y-x\|^{2}-f(\bar{x})\big)\nonumber\\
=& 2 \mu\big(e_{\lambda}f(x)-e_{\lambda}f(\bar{x})\big),\label{4.9}
\end{align}
the fourth inequality holds with $\gamma < 2$ and $\parallel y-\bar{x} \parallel<1$. Combining (\ref{4.8}) and (\ref{4.9}), the proof is completed.
\end{proof}

The following Proposition give the necessary conditions of (iii) in Theorem \ref{th4.2}.

\begin{proposition}\label{prop3.1}
Let $f:\mathbb{R}^n \rightarrow \bar{\mathbb{R}}$ be a proper and closed function and $\bar{x}\in \textrm{dom}\,f$ with $0\in \partial f(\bar{x})$. Assume that $f$ is prox-bounded with threshold $\lambda_f$ and $\lambda\in (0, \lambda_f )$. Suppose that either
\begin{enumerate}
  \item [(i)] $\bar{x}$ is a global optimal point; or
  \item [(ii)] $\bar{x}$ is a local optimal point and $f$ is prox-regular at $\bar{x}$ for $\bar{v}=0$.
  \end{enumerate}
Then, the condition (iii) in Theorem \ref{th4.2} holds.
\end{proposition}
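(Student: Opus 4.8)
The plan is to verify condition (iii) of Theorem \ref{th4.2} separately in the two cases, in each case reducing the claim to the statement that for $x$ close to $\bar{x}$ some proximal point $y\in P_\lambda f(x)$ lies where $f\ge f(\bar{x})$. Throughout I would use that prox-boundedness with $\lambda\in(0,\lambda_f)$ guarantees $P_\lambda f(x)\neq\emptyset$, and that $P_\lambda f$ is locally bounded and outer semicontinuous, together with the elementary competitor inequality obtained by testing the infimum defining $P_\lambda f(x)$ against $z=\bar{x}$,
\begin{equation}
f(y)+\frac{1}{2\lambda}\|y-x\|^2\le f(\bar{x})+\frac{1}{2\lambda}\|\bar{x}-x\|^2,\qquad y\in P_\lambda f(x).\nonumber
\end{equation}
Case (i) is then immediate: if $\bar{x}$ is a global minimizer then $f(y)\ge f(\bar{x})$ for \emph{every} $y\in\mathbb{R}^n$, so any $y\in P_\lambda f(x)$ already satisfies $f(y)\ge f(\bar{x})$ and condition (iii) holds for an arbitrary $\bar{\varepsilon}>0$.

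For case (ii) I would first invoke local optimality to fix $\delta>0$ with $f(z)\ge f(\bar{x})$ for all $z\in B(\bar{x},\delta)$; it then suffices to produce, for all $x$ near $\bar{x}$, a proximal point $y\in P_\lambda f(x)\cap B(\bar{x},\delta)$. The mechanism is prox-regularity: since $f$ is prox-regular at $\bar{x}$ for $\bar{v}=0$ and prox-bounded, the Moreau-envelope theory for prox-regular functions (cf. \cite{Rockafellar}) yields, in the prox-regular regime for $\lambda$, that $P_\lambda f$ is single-valued and continuous on a neighborhood of $\bar{x}$ with $P_\lambda f(\bar{x})=\bar{x}$ --- here the hypothesis $0\in\partial f(\bar{x})$ is precisely the condition $\bar{v}=0$ that makes the base point $\bar{x}+\lambda\bar{v}$ equal to $\bar{x}$. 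Continuity then gives $P_\lambda f(x)\to\bar{x}$ as $x\to\bar{x}$, so choosing $\bar{\varepsilon}>0$ small enough that $x\in B(\bar{x},\bar{\varepsilon})$ forces $y=P_\lambda f(x)\in B(\bar{x},\delta)$ yields $f(y)\ge f(\bar{x})$, as required. Alternatively one may bypass single-valuedness and argue by contradiction: if along some $x_k\to\bar{x}$ every $y_k\in P_\lambda f(x_k)$ had $\|y_k-\bar{x}\|\ge\delta$, then local boundedness and outer semicontinuity of $P_\lambda f$ would extract a limit $y^\ast\in P_\lambda f(\bar{x})$ with $y^\ast\neq\bar{x}$, contradicting $P_\lambda f(\bar{x})=\{\bar{x}\}$.

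The hard part is exactly this localization of the \emph{global} proximal point. Local optimality controls $f$ only on $B(\bar{x},\delta)$, whereas $P_\lambda f(x)$ is a global argmin that could a priori be captured by a lower branch of $f$ far from $\bar{x}$; the decisive ingredient is therefore the uniqueness identity $P_\lambda f(\bar{x})=\{\bar{x}\}$, for which prox-regularity (and the attendant restriction on $\lambda$) is what rules out a distant competitor. Once this identity is secured, the outer semicontinuity and local boundedness of $P_\lambda f$ make the passage $y(x)\to\bar{x}$ routine, and local minimality closes the argument.
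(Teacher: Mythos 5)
Your proposal is correct and follows essentially the same route as the paper: case (i) is dismissed as immediate, and case (ii) rests on the single-valuedness/continuity of $P_\lambda f$ near $\bar{x}$ for prox-regular, prox-bounded $f$ (Proposition 13.37 of \cite{Rockafellar}) together with the identity $P_\lambda f(\bar{x})=\bar{x}$, so that the proximal point lands in the neighborhood where local optimality forces $f(y)\ge f(\bar{x})$. Your explicit remark that the restriction on $\lambda$ from prox-regularity is what rules out a distant global competitor is a point the paper leaves implicit, but the argument is the same.
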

\begin{proof} (i) holds naturally and we only consider (ii). By Proposition 13.37 in \cite{Rockafellar} we known that $P_{\lambda}f$ is satisfied a Lipschitz condition of rank $K$ on $\mathbb{R}^n$ and $P_{\lambda}f(\bar{x})=\bar{x}$. Then for $\bar{\varepsilon}>0$ and any $x\in  B(\bar{x},\bar{\varepsilon})$ with $f(x)> f(\bar{x})$,
\begin{align}
\|P_{\lambda}f(x)-P_{\lambda}f(\bar{x})  \|=\|y-x\| \leq K \|x-\bar{x}\|<K\bar{\varepsilon}, \nonumber
\end{align}
where $y=P_{\lambda}f(x)$. Since $\bar{x}$ is a local optimal point, one has that $f(y)\geq f(\bar{x})$. The proof is completed. \end{proof}

Consider the u-LSEB and u-HEB, we have the following results.
\begin{corollary} \label{coro 4.1}
Let $f:\mathbb{R}^n \rightarrow \bar{\mathbb{R}}$ be a proper and closed function and prox-bounded with threshold $\lambda_f$. Assume that the following conditions hold:
\begin{enumerate}
  \item [(i)] $f$ takes a constant value on a nonempty and compact set $\Lambda \subseteq \textrm{dom}\,f$;
  \item [(ii)] For any $\bar{x}\in \Lambda$, $0\in \partial e_{\lambda}f(\bar{x})$ with $\lambda\in (0, \lambda_f )$;
  \item [(iii)] $f$ satisfies the u-LSEB on $\Lambda$ with an exponent $\gamma> 0$ and a constant $\mu>0$.
\end{enumerate}
Then $e_{\lambda }f$ satisfies the u-LSEB on $\Lambda$ with an exponent $\max\{1,\gamma\}$ and a constant $\lambda\big(1+(\frac{\mu}{\lambda})^{\gamma^{-1}}\big)^{\max\{1,\gamma\}}$.
\end{corollary}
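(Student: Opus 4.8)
The plan is to deduce this uniformized statement from the pointwise result of Theorem \ref{th4.1} by means of the compactness argument already packaged in Lemma \ref{Lemma 1}, in the same spirit as the equivalence theorem of Section \ref{3.1} was obtained. Thus I would not replay the Moreau-envelope estimates of Theorem \ref{th4.1}; instead I would verify that each individual point of $\Lambda$ satisfies the hypotheses of that theorem, apply it pointwise, and then glue the resulting pointwise error bounds together.

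First I would record two immediate consequences of the hypotheses. By condition (ii) and Example 10.32 in \cite{Rockafellar}, for every $\bar{x}\in\Lambda$ one has $0\in\lambda^{-1}(\bar{x}-P_{\lambda}f(\bar{x}))$, hence $\bar{x}\in P_{\lambda}f(\bar{x})$ and $e_{\lambda}f(\bar{x})=f(\bar{x})$; since $f$ is constant on $\Lambda$ by (i), this shows that $e_{\lambda}f$ takes the same constant value on $\Lambda$, and moreover $\Lambda\subseteq\textrm{dom}\,\partial(e_{\lambda}f)$ because $0\in\partial e_{\lambda}f(\bar{x})$ for each $\bar{x}$. These are precisely the structural requirements ($e_{\lambda}f$ constant on a nonempty compact subset of $\textrm{dom}\,\partial(e_{\lambda}f)$) needed to feed $e_{\lambda}f$ into Lemma \ref{Lemma 1}.

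Next I would extract pointwise LSEB from the u-LSEB hypothesis (iii). Let $\varepsilon,\nu>0$ be the constants in the definition of u-LSEB for $f$ on $\Lambda$. Fixing any $z\in\Lambda$ and restricting attention to $x\in\mathfrak{B}(z,\varepsilon,\nu)$, one has $d(x,\Lambda)\le\|x-z\|<\varepsilon$ together with $f(z)<f(x)<f(z)+\nu$, so the u-LSEB inequality applied with reference point $z$ yields $d^{\gamma}(x,\textrm{lev}_{\leq f(z)}f)\le\mu\,d(0,\partial f(x))$ on $\mathfrak{B}(z,\varepsilon,\nu)$. Hence $f$ satisfies the LSEB at every $z\in\Lambda$ with the same exponent $\gamma>0$ and constant $\mu>0$. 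Together with the prox-boundedness of $f$ and condition (ii), all hypotheses of Theorem \ref{th4.1} hold at each $z\in\Lambda$, so that theorem gives that $e_{\lambda}f$ satisfies the LSEB at every $z\in\Lambda$ with exponent $\max\{1,\gamma\}$ and constant $\lambda(1+(\tfrac{\mu}{\lambda})^{\gamma^{-1}})^{\max\{1,\gamma\}}$ --- crucially the same exponent and constant at every point, because the data $\gamma,\mu,\lambda$ are uniform over $\Lambda$.

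Finally, applying Lemma \ref{Lemma 1} to $e_{\lambda}f$ on $\Lambda$ produces u-LSEB with exponent and constant equal to the maxima of the pointwise ones over a finite subcover; since those pointwise exponents and constants are all equal to $\max\{1,\gamma\}$ and $\lambda(1+(\tfrac{\mu}{\lambda})^{\gamma^{-1}})^{\max\{1,\gamma\}}$, the maxima reproduce exactly these values, which is the claim. The only point demanding care is the bookkeeping in the previous paragraph: I must check that the neighborhood and level-set data in the u-LSEB definition for $f$ are genuinely independent of the reference point (which holds because $f$, and hence $e_{\lambda}f$, is constant on $\Lambda$, so $\textrm{lev}_{\leq f(z)}f$ and the threshold $\nu$ do not vary with $z$), and that $e_{\lambda}f$ inherits both constancy on $\Lambda$ and membership of $\Lambda$ in $\textrm{dom}\,\partial(e_{\lambda}f)$; given the equivalence-theorem template of Section \ref{3.1}, no genuinely new difficulty arises.
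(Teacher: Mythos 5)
Your proposal is correct and follows essentially the same route as the paper's own proof: read the u-LSEB hypothesis pointwise to get LSEB at each $\bar{x}\in\Lambda$ with the uniform data $\gamma,\mu$, apply Theorem \ref{th4.1} at each point, and recover uniformity from the fact that the resulting exponent and constant do not depend on the point. Your explicit appeal to Lemma \ref{Lemma 1} to restore a uniform neighborhood radius and level threshold is slightly more careful than the paper's one-line conclusion, but it is the same argument.
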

\begin{proof} By condition (iii) and Definition \ref{def2.1}, $f$ satisfies the LSEB at every point $\bar{x}\in \Lambda$ with an exponent $\gamma $ and a constant $\mu$. Then by condition (ii) and the Theorem \ref{th4.1}, it is easy to see that $e_{\lambda }f$ satisfies the LSEB at every point $\bar{x}\in \Lambda$ with an exponent $\max\{1,\gamma\}$ and a constant $\lambda\big(1+(\frac{\mu}{\lambda})^{\gamma^{-1}}\big)^{\max\{1,\gamma\}}$, which means that $e_{\lambda }f$ satisfies the u-LSEB on $\Lambda$ with an exponent $\max\{1,\gamma\}$ and a constant $\lambda\big(1+(\frac{\mu}{\lambda})^{\gamma^{-1}}\big)^{\max\{1,\gamma\}}$. The proof is completed. \end{proof}

We omit the proof of Corollary \ref{coro 4.2}, which proof is similar to Corollary \ref{coro 4.1}.

\begin{corollary} \label{coro 4.2}
Let $f:\mathbb{R}^n \rightarrow \bar{\mathbb{R}}$ be a proper and closed function and prox-bounded with threshold $\lambda_f$. Assume that the following conditions hold:
\begin{enumerate}
  \item [(i)] $f$ takes a constant value on a nonempty and compact set $\Lambda \subseteq \textrm{dom}\,f$;
  \item [(ii)] For any $\bar{x}\in \Lambda$, $0\in \partial e_{\lambda}f(\bar{x})$ with $\lambda\in (0, \lambda_f )$;
  \item [(iii)] $f$ satisfies the u-HEB on $\Lambda$ with an exponent $\gamma> 0$ and a constant $\mu\geq 2\lambda$;
  \item [(iv)] For $\bar{\varepsilon}>0$ and any $x\in   B(\Lambda,\bar{\varepsilon}):=\{y:d(y,\Lambda)< \bar{\varepsilon}\}$ with $f(x)> f(\bar{x})$, there exists a point $y\in P_{\lambda }f(x)$ such that $f(y)\geq f(\bar{x})$.
\end{enumerate}
Then $e_{\lambda }f$ satisfies the u-HEB on $\Lambda$ with an exponent $\max\{2,\gamma\}$ and a constant $2^{\max\{1,\gamma-1\}}\mu$.
\end{corollary}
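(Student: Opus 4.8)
The plan is to reduce the uniform statement to the pointwise Theorem~\ref{th4.2} applied at each base point of $\Lambda$, exactly as in the proof of Corollary~\ref{coro 4.1}. First I would observe that, since every $\bar{x}\in\Lambda$ satisfies $B(\bar{x},\varepsilon)\subseteq B(\Lambda,\varepsilon)$, the uniform bound in hypothesis (iii) restricts to the pointwise local H\"older error bound: for each fixed $\bar{x}\in\Lambda$ and all $x\in B(\bar{x},\varepsilon)$ with $f(\bar{x})<f(x)$ one has $d^{\gamma}(x,\textrm{lev}_{\leq f(\bar{x})}f)\leq \mu\big(f(x)-f(\bar{x})\big)$. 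Thus $f$ satisfies the LHEB at every point of $\Lambda$ with the \emph{same} exponent $\gamma$ and constant $\mu\geq 2\lambda$.

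Next I would verify, point by point, that the hypotheses of Theorem~\ref{th4.2} hold at each $\bar{x}\in\Lambda$. Condition (i) of Theorem~\ref{th4.2} is furnished by prox-boundedness of $f$ together with $\lambda\in(0,\lambda_f)$; condition (ii) is the pointwise LHEB just obtained, whose constant already meets the requirement $\mu\geq 2\lambda$ by the corollary's hypothesis (iii); and $0\in\partial e_{\lambda}f(\bar{x})$ comes from hypothesis (ii). For the projection hypothesis (iii) of Theorem~\ref{th4.2}, I would again use the inclusion $B(\bar{x},\bar{\varepsilon})\subseteq B(\Lambda,\bar{\varepsilon})$ together with the fact that $f$ is constant on $\Lambda$, so that the selection condition (iv) of the corollary delivers exactly the required $y\in P_{\lambda}f(x)$ with $f(y)\geq f(\bar{x})$ for every $x\in B(\bar{x},\bar{\varepsilon})$ with $f(x)>f(\bar{x})$. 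Applying Theorem~\ref{th4.2} then yields that $e_{\lambda}f$ satisfies the LHEB at each $\bar{x}\in\Lambda$ with exponent $\max\{2,\gamma\}$ and constant $2^{\max\{1,\gamma-1\}}\mu$.

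Finally I would pass from this pointwise conclusion to the uniform one. From $0\in\partial e_{\lambda}f(\bar{x})$ and Example~10.32 in \cite{Rockafellar} one has $e_{\lambda}f(\bar{x})=f(\bar{x})$ for every $\bar{x}\in\Lambda$, so $e_{\lambda}f$ inherits the constant value of $f$ on $\Lambda$. Since $e_{\lambda}f$ is proper, closed, constant on the nonempty compact set $\Lambda$, and satisfies the LHEB at each of its points with the common exponent $\max\{2,\gamma\}$ and constant $2^{\max\{1,\gamma-1\}}\mu$, Lemma~\ref{Lemma 2} produces a single radius valid across $\Lambda$ and hence the u-HEB on $\Lambda$ with the stated exponent and constant.

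The only place requiring genuine care, rather than bookkeeping, is the transfer of the selection hypothesis: one must check that condition (iv), stated on the tube $B(\Lambda,\bar{\varepsilon})$ around the whole set, indeed specializes to hypothesis (iii) of Theorem~\ref{th4.2} at each individual base point $\bar{x}$, which is where the constancy of $f$ on $\Lambda$ (rendering $f(\bar{x})$ independent of the chosen base point) is essential. Everything else is a routine reprise of Corollary~\ref{coro 4.1}.
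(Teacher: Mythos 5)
Your proposal is correct and follows essentially the route the paper intends: the paper omits this proof, saying only that it is similar to Corollary \ref{coro 4.1}, i.e., restrict the uniform hypotheses (iii) and (iv) to each base point $\bar{x}\in\Lambda$ via $B(\bar{x},\varepsilon)\subseteq B(\Lambda,\varepsilon)$, apply Theorem \ref{th4.2} pointwise, and reassemble. If anything you are slightly more careful than the paper's template, since you explicitly invoke Lemma \ref{Lemma 2} together with the constancy of $e_{\lambda}f$ on $\Lambda$ (from $e_{\lambda}f(\bar{x})=f(\bar{x})$) to obtain a single radius across $\Lambda$, a uniformization step that the proof of Corollary \ref{coro 4.1} leaves implicit.
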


From Remark 5.1 (i) in \cite{Li 6}, if $f$ is a KL function with exponent $\alpha\in (\frac{1}{2},1]$ and $\inf f > -\infty$, then $e_{\lambda }f$ is a KL function with exponent $\alpha$.
The following example shows that our conclusion is more applicable than Remark 5.1 (i).
\begin{example}
Let $f: \mathbb{R}\rightarrow \mathbb{R}$ be given by
\begin{align*}
\begin{split}
f(x ):= \left \{
\begin{array}{ll}
     0                                             & \textrm{if}\,\, x\leq 0,\\
     x^2+\frac{1}{n}-\frac{1}{n^2}                 & \textrm{if}\,\, \frac{1}{n}<x\leq \frac{1}{n-1},n=3,4,...,\\
     x^2+\frac{1}{4}                               & \textrm{if}\,\, x> \frac{1}{2}.\\
\end{array}
\right.
\end{split}
\end{align*}
It follows from the Example 3.19 in \cite{Kruger 2019 2-23} that $f$ is proper and closed function, and from Example 2.2 in \cite{Bai 1} that $f$ satisfies the LSEB at $\bar{x}=0$ with the exponent $1$ and the constant $\frac{1}{2}$. However, $f$ does not have KL property at $0$ with any exponent $\alpha\in  [0, 1)$, since, for $x_n=\frac{1}{n-1}$ with $n$ sufficiently large, one has
$$\big(f(x_n )-f(0)\big)^{-\alpha}d\big(0,\partial f(x_n )\big)\leq (\frac{1}{n})^{-\alpha}\frac{2}{n-1}=\frac{2n^{\alpha}}{n-1}\rightarrow 0 \,\,\textrm{as}\,\, n\rightarrow \infty.$$

Now, we consider the following function:
\begin{align*}
\begin{split}
e_{ \frac{1}{2}}f(x):= \left \{
\begin{array}{ll}
     0                                           & \textrm{if}\,\, x\leq 0,\\
    \frac{1}{2}x^2+\frac{1}{n}-\frac{1}{n^2}                 & \textrm{if}\,\, \frac{1}{n}<x\leq \frac{1}{n-1},n=3,4,...,\\
     \frac{1}{2}x^2+\frac{1}{4}                               & \textrm{if}\,\, x> \frac{1}{2}.\\
\end{array}
\right.
\end{split}
\end{align*}
It is easy to see that $e_{ \frac{1}{2}}f$ is a proper and closed function and $f$ satisfies the conditions (i) and (ii) in Theorem \ref{th4.1}, which means that $e_{ \frac{1}{2}}f$ satisfies the LSEB at $\bar{x}=0$ with the exponent $1$ and the constant $1$. However,$e_{ \frac{1}{2}}f$ does not have KL property at $0$ with any exponent $\alpha\in  [0, 1)$, since, for $x_n=\frac{1}{n-1}$ with $n$ sufficiently large, one has
\begin{align}
\big(e_{ \frac{1}{2}}f(x_n )-e_{ \frac{1}{2}}f(0)\big)^{-\alpha}d\big(0,\partial e_{ \frac{1}{2}}f(x_n )\big)&=\big(\frac{1}{2(n-1)^2}+\frac{1}{n}-\frac{1}{n^2}\big)^{-\alpha}\frac{1}{n-1}\nonumber\\
&\leq (\frac{1}{n}-\frac{1}{2n^2})^{-\alpha}\frac{1}{n-1}\nonumber\\
&\leq (\frac{5}{6n})^{-\alpha}\frac{1}{n-1}=\frac{(1.2n)^{\alpha}}{n-1}\rightarrow 0 \,\,\textrm{as}\,\, n\rightarrow \infty.\nonumber
\end{align}

\end{example}

The following example shows that Theorem \ref{th4.2} is more applicable than Theorem \ref{th4.1}.

\begin{example}
Let $f: \mathbb{R} \rightarrow \mathbb{R}$ be given by
\begin{align*}
\begin{split}
f(x):= \left \{
\begin{array}{ll}
   x^{2}(2+\cos{\frac{1}{x}} )                  & \textrm{if}\,\, x\neq 0,\\
      0                                               & \textrm{if}\,\, x= 0.\\
\end{array}
\right.
\end{split}
\end{align*}
It easy to see that closed function $f$ satisfies the LHEB at $\bar{x}=0$ with the exponent $2$ and the constant $1$. But $f$ not satisfies the LSEB at $0$ with any exponent $\gamma\in  [0, +\infty)$, since, for any $x$ sufficiently near $0$ with $x\neq 0$, we have
$$\nabla f(x)=4x+2x\cos\frac{1}{x}+\sin\frac{1}{x}.$$
Picking $x_k^1=\frac{1}{2k\pi}$ and $x_k^2=\frac{1}{2k\pi+1.5\pi}$, for sufficiently large $k$, one has $\nabla f(x_k^1)=\frac{3}{k\pi}>0$ and $\nabla f(x_k^2)=\frac{4}{2k\pi+1.5\pi}-1<0$, which means that there is a sequence $\{x_k\}$ converging to $0$ with $\nabla  f(x_k)=0$ for all $k\in N$.

Now, we consider the following function:
\begin{align*}
\begin{split}
e_{ \frac{1}{2}}f(x):= \left \{
\begin{array}{ll}
   \inf_{y\in \mathbb{R}^n}\big\{y^{2}(2+\cos{\frac{1}{y}} ) +\|y-x\|^2  \}                & \textrm{if}\,\, x\neq 0,\\
      0                                                & \textrm{if}\,\, x= 0.\\
\end{array}
\right.
\end{split}
\end{align*}
Taking into account that $0$ is a global optimal point and $\mu= 2\lambda=1$. From proposition \ref{prop3.1} and Theorem \ref{th4.2}, we have that $e_{ \frac{1}{2}}f$ satisfies the LHEB at $0$ with the exponent 2 and the constant 2. Noting that for any $x$ near $0$ with $x\neq 0$, $P_{ \frac{1}{2}}f(x)\subseteq [\frac{x}{4},\frac{x}{2}]$. So we can assume that $ax \in P_{ \frac{1}{2}}f(x)$ with $a\in [ \frac{1}{4}, \frac{1}{2}]$. For any $x$ sufficiently near $0$, if $x\neq 0$, we have
$$\nabla e_{ \frac{1}{2}}f(x)=(6a^2-4a+2)x+2a^2x\cos\frac{1}{ax}+a\sin\frac{1}{ax}.$$
Picking $x_k^1=\frac{1}{2ak\pi}$ and $x_k^2=\frac{1}{2ak\pi+1.5a\pi}$, for sufficiently large $k$, one has $\nabla f(x_k^1)=\frac{8a^2-4a+2}{2ak\pi}>0$ and $\nabla f(x_k^2)=\frac{6a^2-4a+2}{2ak\pi+1.5a\pi}-a<0$, which means that there is a sequence $\{x_k\}$ converging to $0$ with $\nabla  f(x_k)=0$ for all $k\in N$. Hence $e_{ \frac{1}{2}}f$ not satisfies the LSEB at $0$ with any exponent $\gamma\in  [0, +\infty)$.
\end{example}

\section{Concluding remarks }\label{5}

In this paper, we first establish the equivalence of three types of error bounds: u-KL property, u-LSEB and u-HEB for prox-regular functions. Then we show that the behavior of the LSEB (resp. LHEB) exponent and constant is via Moreau envelopes under suitable assumptions. Intuitively, our proof procedure should be similar to that of the literature \cite{Li 6}, but it is difficult to get a conclusion similar to Lemma 5.1 in \cite{Li 6}. Thus, in Theorems \ref{th4.1} and \ref{th4.2}, we prove the LSEB and LHEB property of $e_\lambda f$ directly, without having to go through the inf-projection function $\inf_{y\in\mathbb{R}^m}F(x,y)$. However it is worth noting that condition (iii) cannot be omitted in Theorem \ref{th4.2}.

One future research direction is to develop the behavior of the LSEB (resp. LHEB) exponent and constant via the inf-projection function. Another direction is to find some functions that satisfy LSEB but not KL inequality and discuss the convergence of the algorithm.

\vskip 6mm
\noindent{\bf Acknowledgments}

\noindent  

This research was supported by the National Natural Science Foundation of China (Grant No. 11971078)

\end{document}